\documentclass{article}
\usepackage{here, amsmath, latexsym, amssymb, bm, ascmac, mathtools, multicol, tcolorbox, subfig, tikz, mathrsfs, siunitx, indentfirst}
\usepackage{amsthm}
\usepackage{tikz}
\usepackage[a4paper, total={6in, 9in}]{geometry}

\usepackage{titlesec}
\titleformat{\section}[block]{\bfseries\large}{\thesection.}{0.5em}{}
\titleformat{\subsection}[block]{\bfseries}{\thesubsection .}{0.5em}{\centering}

\theoremstyle{plain}
\newtheorem{thm}{Theorem}[section]
\newtheorem*{thm*}{Theorem}
\newtheorem{lem}[thm]{Lemma}

\newtheorem{prop}[thm]{Proposition}

\theoremstyle{definition}
\newtheorem{rem}{Remark}

\title{Critical Strichartz estimates for orthonormal systems}
\author{Yinghao Gong}

\begin{document}
	\maketitle
	\begin{abstract}
		Orthonormal Strichartz estimates for the free Schrödinger propagator take the form \begin{equation*}
			\left\Vert\sum_j\lambda_j\left|e^{it\Delta}f_j\right|^2\right\Vert_{L_t^\frac{q}{2}L_x^\frac{r}{2}(\mathbb{R}\times\mathbb{R}^d)}\lesssim\Vert\lambda\Vert_{\ell^\alpha(\mathbb{C})}
		\end{equation*}for arbitrary orthonormal systems $(f_j)_j$ in the homogeneous Sobolev space $\dot{H}^s(\mathbb{R}^d)$. In the admissible region, the optimal range of $\alpha$ has been established when $q\geq r/d'$. For $d\geq2$ and $2<q<r/d'$, it has been an open question as to determine whether the estimate holds when $\alpha=q/2$. We prove that the estimate holds in this critical case whenever $q=4$ and $r<\infty$. Our approach is robust and we illustrate this by extending the result to fractional Schrödinger propagators.
	\end{abstract}
	
	\section{Introduction.}
	Inequalities of the following form are called the \textit{(homogeneous) Strichartz estimates} for the Schrödinger operator $e^{it\Delta}$ on $\mathbb{R}^d$:\begin{equation}\label{classical Strichartz}
		\left\Vert e^{it\Delta}|\nabla|^{-s}f\right\Vert_{L_t^qL_x^r}\lesssim\Vert f\Vert_{L^2},
	\end{equation}where $L_t^qL_x^r=L_t^qL_x^r(\mathbb{R}\times\mathbb{R}^d)$, $L^2=L^2(\mathbb{R}^d)$. A necessary scaling condition for (\ref{classical Strichartz}) to hold is \begin{equation*}
		s=\frac{d}{2}-\frac{d}{r}-\frac{2}{q}.
	\end{equation*}We note that (\ref{classical Strichartz}) is true if $(q,r)$ lies in the closed region bounded by $2\leq q,r\leq\infty$ and $s\geq0$, except for the failure at $(q,r)=(\infty,\infty)$ for $d\geq1$, and the failure at $(q,r)=(2,\infty)$ for $d\geq2$, following the classic work of Strichartz \cite{Strichartz} and subsequent developments. (See, \cite{Ginibre-Velo,Montgomery-Smith,Keel-Tao,Z.Guo}.) We also note that (\ref{classical Strichartz}) fails if $(q,r)$ does not satisfy $2\leq q,r\leq\infty$ and $s\geq0$. A pair $(q,r)$ is called \textit{admissible} if (\ref{classical Strichartz}) holds for such exponents. 
	
	\begin{figure}[H]
		\centering
		\begin{tikzpicture}
			\draw [->, loosely dotted, thin] (0.05, 0) -- (2.5, 0) node [right] {$\frac{1}{r}$};
			\draw [->, loosely dotted, thin] (0, 0.05) -- (0, 2.5) node [above] {$\frac{1}{q}$};
			\fill [gray!40!white] (0, 0) -- (0, 1) -- (2, 0) -- cycle;
			\draw [solid, thick] (0.05, 0) -- (2, 0);
			\draw [solid, thick] (0, 0.05) -- (0, 1);
			\node [below left] (0, 0) {$0$};
			\draw [thick]  (0, 1) -- (2, 0);
			\draw [thick] (0,0) circle (0.05cm);
			\fill (2, 0) circle (0.05cm) node [below] {\tiny$(\frac{1}{2},0)$};
			\fill (0, 1) circle (0.05cm) node [left] {\tiny$(0,\frac{1}{4})$};
			\fill [white] (0, 0) circle (0.04cm);
		\end{tikzpicture}	
		\begin{tikzpicture}
			\draw [->, loosely dotted, thin] (0.05, 0) -- (2.5, 0) node [right] {$\frac{1}{r}$};
			\draw [->, loosely dotted, thin] (0, 0.05) -- (0, 2.5) node [above] {$\frac{1}{q}$};
			\fill [gray!40!white] (0, 0) -- (0, 2) -- (2, 0) -- cycle;
			\draw [solid, thick] (0.05, 0) -- (2, 0);
			\draw [solid, thick] (0, 0.05) -- (0, 1.95);
			\node [below left] (0, 0) {$0$};
			\draw [thick]  (0.03, 1.962) -- (2, 0);
			\draw [thick] (0,2) circle (0.05cm) node [left] {\tiny$(0,\frac{1}{2})$};
			\draw [thick] (0,0) circle (0.05cm);
			\fill (2, 0) circle (0.05cm) node [below] {\tiny$(\frac{1}{2},0)$};
			\fill [white] (0, 0) circle (0.04cm);
			\fill [white] (0, 2) circle (0.04cm);
		\end{tikzpicture}	
		\begin{tikzpicture}
			\draw [->, loosely dotted, thin] (0.05, 0) -- (2.5, 0) node [right] {$\frac{1}{r}$};
			\draw [->, loosely dotted, thin] (0, 0.05) -- (0, 2.5) node [above] {$\frac{1}{q}$};
			\fill [gray!40!white] (0, 0) -- (0, 2) -- (0.6, 2) -- (2, 0) -- cycle;
			\draw [solid, thick] (0.05, 0) -- (2, 0);
			\draw [solid, thick] (0, 0.05) -- (0, 1.95);
			\node [below left] (0, 0) {$0$}; 
			\draw [thick]  (0.6, 2) -- (2, 0);
			\draw [thick] (0,2) circle (0.05cm) node [left] {\tiny$(0,\frac{1}{2})$};
			\fill (0.6,2) circle (0.05cm) node [above right] {\footnotesize$(\frac{d-2}{2d},\frac{1}{2})$};
			\draw [thick] (0,0) circle (0.05cm);
			\draw [solid, thick] (0.05, 2) -- (0.6, 2);
			\fill (2, 0) circle (0.05cm) node [below] {\tiny$(\frac{1}{2},0)$};
			\fill [white] (0, 0) circle (0.04cm);
			\fill [white] (0, 2) circle (0.04cm);
		\end{tikzpicture}	
		\caption{The region where (\ref{classical Strichartz}) holds when $d=1$, $d=2$ and $d\geq3$, respectively.}
	\end{figure}
	
	Strichartz estimates serve as a fundamental tool and play a crucial role in the modern theory of nonlinear dispersive PDEs. By capturing the space-time decay properties of the free propagator, these estimates provide the essential framework required to establish the local and global well-posedness of various equations, including the nonlinear Schrödinger equation (NLS) and Hartree equations. A classical application of these estimates is in proving the well-posedness of critical NLS via a standard contraction mapping argument. (See, for example, \cite{C.Muscalu}, Section 11.) Furthermore, recent adaptations of these inequalities to orthonormal systems have opened up new pathways for analysing the dynamics of fermionic systems. In this paper, we are interested in the case of Schrödinger-type propagators. Inequalities of the following form are called the \textit{orthonormal Strichartz estimates} for the Schrödinger operator\begin{equation}\label{orthonormal Strichartz}
		\left\Vert\sum_j\lambda_j\left|e^{it\Delta}|\nabla|^{-s}f_j\right|^2\right\Vert_{L_t^\frac{q}{2}L_x^\frac{r}{2}}\lesssim\Vert\lambda\Vert_{\ell^\alpha},
	\end{equation}where $(f_j)\subset L^2$ is an arbitrary orthonormal system, and $\lambda=(\lambda_j)\in\ell^\alpha$ is an arbitrary sequence. We recall that $s=d/2-d/r-2/q$. We remark that (\ref{orthonormal Strichartz}) generally implies (\ref{classical Strichartz}), since for every non-zero datum $f\in L^2$, there always exists an orthonormal system $(f_j)\subset L^2$ with $f_1=f/\|f\|_{L^2}$, and taking $\lambda=(1,0,0,\ldots)$, we get\begin{equation*}
		\left\Vert\sum_j\lambda_j\left|e^{it\Delta}|\nabla|^{-s}f_j\right|^2\right\Vert_{L_t^\frac{q}{2}L_x^\frac{r}{2}}=\|f\|_{L^2}^{-2}\left\Vert e^{it\Delta}|\nabla|^{-s}f\right\Vert_{L_t^qL_x^r}^2.
	\end{equation*}
	
	As mentioned previously, inequality (\ref{orthonormal Strichartz}) is known to have numerous important applications to the nonlinear time-dependent Hartree equation. Indeed, in the context of many-body quantum mechanics, orthonormal systems are typically used to describe independent fermions in $\mathbb{R}^d$, whose dynamics are governed by Schrödinger equations. (See, for example, \cite{Chen,Frank1}.) Also, inequality (\ref{orthonormal Strichartz}) is known to imply a certain Besov space refinement of (\ref{classical Strichartz}). (See, \cite{Frank3}.) Motivated by the desire to understand dispersive effects in infinite quantum systems, Frank--Lewin--Lieb--Seiringer \cite{Frank1} were the first to study inequalities of the form (\ref{orthonormal Strichartz}). Later, Frank--Sabin \cite{Frank3} improved the result in \cite{Frank1} and we summarize the results from these papers in the following.\begin{thm}[\cite{Frank1,Frank3}]\label{FLLS}Let $d\geq1$. If \begin{equation*}
			\frac{d}{2}=\frac{d}{r}+\frac{2}{q};\qquad 2\leq r< 2+\frac{4}{d-1},
		\end{equation*}then\begin{equation}\label{first}
			\left\Vert\sum_j\lambda_j\left|e^{it\Delta}f_j\right|^2\right\Vert_{L_t^\frac{q}{2}L_x^\frac{r}{2}}\lesssim\Vert\lambda\Vert_{\ell^\frac{2r}{r+2}}
		\end{equation}holds for every orthonormal system $(f_j)\subset L^2$, and every sequence $\lambda=(\lambda_j)$. Moreover, the exponent $2r/(r+2)$ is sharp.
	\end{thm}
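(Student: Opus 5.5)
We follow the duality principle of Frank--Sabin: the estimate (\ref{first}) is equivalent to a Schatten-class bound. Write $T f=e^{it\Delta}f$, viewed as a map from $L^2_x$ into $L^q_tL^r_x$. For space-time weights $W$, the orthonormal estimate with $\ell^\alpha$ on the right is equivalent to
\begin{equation*}
\left\Vert W T T^* \overline{W}\right\Vert_{\mathfrak{S}^{\alpha'}}\lesssim \Vert W\Vert_{L_t^{2\tilde q}L_x^{2\tilde r}}^2,\qquad \frac{1}{\tilde q}=1-\frac{2}{q},\quad \frac{1}{\tilde r}=1-\frac{2}{r}.
\end{equation*}
Here $\mathfrak{S}^{\alpha'}$ is the Schatten class and $\alpha=2r/(r+2)$, so $\alpha'=2r/(r-2)=2\tilde r$. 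This equivalence comes from writing $\sum_j\lambda_j|Tf_j|^2=\rho(T\gamma T^*)$ with $\gamma=\sum\lambda_j|f_j\rangle\langle f_j|$, then dualizing against $V=|W|^2$ and applying Hölder in Schatten classes. So the whole task reduces to the Schatten bound.

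The operator $TT^*$ is space-time convolution with kernel $K(t,x)=(4\pi i t)^{-d/2}e^{i|x|^2/4t}$. We embed it in an analytic family
\begin{equation*}
T_z=\frac{1}{\Gamma(z+1)}\,(t-i0)^z\text{-type damping}\;e^{it\Delta},
\end{equation*}
that is, an operator with kernel $c_z\,t_+^{z}$-type factors times $K$, normalized so that $T_{-1}$ recovers $TT^*$ up to harmless modifications in time. Stein interpolation for Schatten classes is then run between two endpoints:
\begin{itemize}
\item[(i)] On $\operatorname{Re}z=0$: the multiplier in $(\tau,\xi)$ is bounded, with growth at most $e^{C|\operatorname{Im} z|^2}$. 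This gives $\Vert W_1T_zW_2\Vert_{\mathfrak{S}^\infty}\lesssim\Vert W_1\Vert_{L^\infty}\Vert W_2\Vert_{L^\infty}$.
\item[(ii)] On $\operatorname{Re}z=-\lambda_0$ with $\lambda_0>1$: the kernel is bounded pointwise by $|t|^{-\lambda_0-d/2+1}$. Then
\begin{equation*}
\Vert W_1T_zW_2\Vert_{\mathfrak{S}^2}^2=\int\!\!\int|W_1(t,x)|^2|K_z(t-t',x-x')|^2|W_2(t',x')|^2
\end{equation*}
is controlled by one-dimensional Hardy--Littlewood--Sobolev in $t$ together with $L^1_x$--$L^\infty_x$ bounds in $x$. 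HLS requires $0<2(\operatorname{Re}(-z)+d/2-1)\cdot$(appropriate) $<1$, and this yields $\Vert W_1\Vert_{L^{2u}_tL^2_x}\Vert W_2\Vert_{L^{2u}_tL^2_x}$ for the relevant $u$.
\end{itemize}
Interpolating at $z=-1$ gives the Schatten exponent $\alpha'=2\tilde r$ with Lebesgue exponents matching $(\tilde q,\tilde r)$ via scaling. The HLS admissibility condition at the $\mathfrak{S}^2$ endpoint is exactly where $r<2+4/(d-1)$ enters, since one needs $\lambda_0<(d+1)/2$.

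\textbf{Main obstacle.} The delicate step is bookkeeping the exponents so that the HLS condition and the requirement $\lambda_0>1$ hold simultaneously. This is possible precisely for $d/2<\lambda_0<(d+1)/2$ in the right parametrization, which translates to $2\le r<2+4/(d-1)$. One must also verify admissible growth in $\operatorname{Im}z$ for Stein's theorem. The endpoint $r=2$ (trivial, $\alpha=1$, by the triangle inequality and unitarity) can be used as a check.

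\textbf{Sharpness.} To show $2r/(r+2)$ cannot be increased, we take $\gamma$ to be a large semiclassical projection $\gamma=\mathbb{1}(-\Delta\le 1)$ localized to a ball of radius $R$, i.e.\ $N\sim R^d$ functions. For $|t|\lesssim 1$ the density $\rho(e^{it\Delta}\gamma e^{-it\Delta})$ is then $\approx 1$ on the ball. This gives left side $\gtrsim R^{2d/r}$ versus right side $N^{1/\alpha}=R^{d/\alpha}$. A more refined choice spreading in time up to $|t|\sim R$ uses dispersion of wave packets with comparable frequencies. Letting $R\to\infty$ forces $\alpha\le 2r/(r+2)$, possibly after optimizing the time localization; choosing the right test system here is the second place I expect difficulty.
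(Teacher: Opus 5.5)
The paper does not prove this theorem; it is quoted from \cite{Frank1,Frank3}. Your outline (duality principle, Stein interpolation of an analytic family in Schatten classes, one-dimensional Hardy--Littlewood--Sobolev at the $\mathfrak{S}^2$ endpoint) is the route of \cite{Frank3}, and your duality bookkeeping ($\alpha'=2\tilde r$, $W\in L_t^{2\tilde q}L_x^{2\tilde r}$) is correct. However, the analytic step, which you flag as the main obstacle, fails as written.

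\emph{The analytic family and its decay.} A kernel $\frac{1}{\Gamma(z+1)}t_+^{z}K(t,x)$ does not reduce to $TT^*$ at $z=-1$, since $t_+^z/\Gamma(z+1)=\delta(t)$ there. Your decay on $\operatorname{Re}z=-\lambda_0$ also has the wrong sign. With $|K_z|\lesssim|t|^{1-\lambda_0-d/2}$, the HLS exponent is $2\lambda_0+d-2>1$ for every $\lambda_0>1$, so the $\mathfrak{S}^2$ endpoint is never available. Your range $d/2<\lambda_0<(d+1)/2$ is also incompatible with $\lambda_0>1$ together with that bound.

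The family that works is the Fourier multiplier $\frac{1}{\Gamma(z+1)}(\tau+|\xi|^2)_+^{z}$, in the convention where $TT^*$ is (a constant times) the multiplier $\delta(\tau+|\xi|^2)$. This multiplier is bounded on $\operatorname{Re}z=0$ with admissible growth. Its kernel is $c_z(t-i0)^{-z-1}K(t,x)$ with $|c_z|\lesssim e^{C|\operatorname{Im}z|}$, so on $\operatorname{Re}z=-\lambda$ one has $|K_z(t,x)|\lesssim e^{C|\operatorname{Im}z|}|t|^{\lambda-1-d/2}$. Then $\|W_1T_zW_2\|_{\mathfrak{S}^2}^2\lesssim\iint\|W_1(t)\|_{L^2_x}^2\|W_2(t')\|_{L^2_x}^2|t-t'|^{-(d+2-2\lambda)}\,dt\,dt'$. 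HLS requires $\frac{d+1}{2}<\lambda<\frac{d+2}{2}$ and yields $L_t^{2a}L_x^2$ with $1/a=\lambda-d/2$. Interpolating at $z=-1$ (so $\theta=1/\lambda$) gives $\mathfrak{S}^{2\lambda}$ with $W\in L_t^{2\lambda/(\lambda-d/2)}L_x^{2\lambda}$. This means $\lambda=\tilde r=r/(r-2)$, and the time exponent then matches scaling automatically.

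\emph{The range of $r$.} The lower constraint $\lambda>\frac{d+1}{2}$ is exactly $r<2+4/(d-1)$. The upper constraint $\lambda<\frac{d+2}{2}$, however, is $r>2+4/d$, so the interpolation only covers $2+4/d<r<2+4/(d-1)$, not $2\le r<2+4/(d-1)$ as you claim. The range $2<r\le 2+4/d$, which includes the diagonal $q=r$, must come from interpolating the dual Schatten bound with the trivial $r=2$ case $\|WTT^*\overline{W}\|_{\mathfrak{S}^\infty}\lesssim\|W\|_{L_t^2L_x^\infty}^2$. So $r=2$ is a needed ingredient, not merely a check.

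\emph{Sharpness.} Your computation as carried out (restricting to $|t|\lesssim1$) only gives $\alpha\le r/2$, which is weaker than $\alpha\le 2r/(r+2)$. The missing argument runs as follows.
Take $0\le\gamma\le1$ with $\operatorname{Tr}\gamma\sim R^d$, phase-space localised to $|x|\lesssim R$, $|\xi|\lesssim1$ (for example a superposition of coherent states). Then $\|\gamma\|_{\mathfrak{S}^\alpha}\le(\operatorname{Tr}\gamma)^{1/\alpha}\lesssim R^{d/\alpha}$.
For $|t|\le cR$, most of the conserved mass $\int\rho_{\gamma(t)}=\operatorname{Tr}\gamma$ stays in a ball of radius $CR$. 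H\"older on that ball gives $\|\rho_{\gamma(t)}\|_{L_x^{r/2}}\gtrsim R^{2d/r}$.
Integrating over $|t|\le cR$ gives a left side $\gtrsim R^{2d/r+2/q}=R^{d/2+d/r}$, which forces $\alpha\le 2r/(r+2)$.
The mechanism is confinement for times $\lesssim R$, not dispersion, and no further optimisation in time is needed.
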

	
	\begin{rem}
		For an admissible pair $(q,r)$, we remark that applying (\ref{classical Strichartz}) trivially yields \begin{equation*}
			\left\Vert\sum_j\lambda_j\left|e^{it\Delta}|\nabla|^{-s}f_j\right|^2\right\Vert_{L_t^\frac{q}{2}L_x^\frac{r}{2}}\leq\sum_j\left|\lambda_j\right|\left\Vert e^{it\Delta}|\nabla|^{-s}f_j\right\Vert_{L_t^qL_x^r}^2\lesssim\Vert\lambda\Vert_{\ell^1},
		\end{equation*}even without using orthogonality. The point of Theorem \ref{FLLS} is that the orthogonality of the $f_j$ can give rise to better exponents on the right-hand side. Since $\|\lambda\|_{\ell^\alpha}\leq\|\lambda\|_{\ell^{\alpha'}}$ for $\alpha\geq\alpha'$, it makes sense to speak of the optimal exponent, and Theorem \ref{FLLS} is indeed sharp in this sense.	\end{rem}
	
	To ensure clarity and state results precisely, we introduce the following notation\begin{align*}
		O=(0&,0);\quad A=\left(\frac{d-1}{2d+2},\frac{d}{2d+2}\right);\quad B=\left(\frac{1}{2},0\right);\quad C=\left(0,\frac{1}{2}\right);\\&D=\left(\frac{d-2}{2d},\frac{1}{2}\right);\quad M=\left(0,\frac{1}{4}\right);\quad N=\left(\frac{d-1}{4d},\frac{1}{4}\right),
	\end{align*}and for two points $X_1$, $X_2$,\begin{align*}
		&(X_1,X_2)=\{(1-t)X_1+tX_2;t\in(0,1)\};\\
		&[X_1,X_2)=\{(1-t)X_1+tX_2;t\in[0,1)\};\\
		&[X_1,X_2]=\{(1-t)X_1+tX_2;t\in[0,1]\}.
	\end{align*}For more points $X_1,X_2,\ldots,X_n$, we write $X_1X_2\ldots X_n$ for the convex hull of these points. We use $(X_1X_2\ldots X_n)^o$ to denote the interior of $X_1X_2\ldots X_n$. 
	\begin{figure}[H]
		\centering
		\begin{tikzpicture}
			\draw [->, dotted, thin] (0, 0) -- (6.3, 0) node [right] {$\frac{1}{r}$};
			\draw [->, dotted, thin] (0, 0) -- (0, 6.3) node [above] {$\frac{1}{q}$};
			\draw [loosely dotted] (6, 6) -- (0, 6) -- (6, 0) --cycle;
			\draw (0, 0) -- (0, 6) -- (6, 0) -- cycle;
			\draw [loosely dotted] (0, 3) -- (6, 6);
			\draw (0, 0) -- (2, 4);
			\draw [loosely dotted] (0, 3) -- (1.5, 3);
			\fill (0, 0) circle (0.05cm) node [below left] {$O$};
			\fill (2, 4) circle (0.05cm) node [below] {$A$};
			\fill (6, 0) circle (0.05cm) node [below] {$B$};
			\fill (0, 6) circle (0.05cm) node [left] {$C$};
			\fill (0, 3) circle (0.05cm) node [left] {$M$};
			\fill (1.5, 3) circle (0.05cm) node [right] {$N$};
		\end{tikzpicture}\begin{tikzpicture}
			\draw [->, dotted, thin] (0, 0) -- (6.3, 0) node [right] {$\frac{1}{r}$};
			\draw [->, dotted, thin] (0, 0) -- (0, 6.3) node [above] {$\frac{1}{q}$};
			\draw [loosely dotted] (6, 6) -- (2, 6) -- (6, 0) --cycle;
			\draw (0, 0) -- (0, 6) -- (2, 6) -- (6, 0) -- cycle;
			\draw [loosely dotted] (0, 3) -- (6, 6);
			\draw (0, 0) -- (3, 4.5);
			\draw [loosely dotted] (0, 3) -- (2, 3);
			\fill (0, 0) circle (0.05cm) node [below left] {$O$};
			\fill (3, 4.5) circle (0.05cm) node [below] {$A$};
			\fill (6, 0) circle (0.05cm) node [below] {$B$};
			\fill (0, 6) circle (0.05cm) node [left] {$C$};
			\fill (2, 6) circle (0.05cm) node [above] {$D$};
			\fill (0, 3) circle (0.05cm) node [left] {$M$};
			\fill (2, 3) circle (0.05cm) node [right] {$N$};
		\end{tikzpicture}
		\caption{Notation in the case $d=2$ (left), and $d\geq3$ (right).}
	\end{figure}
	Let $\alpha_0$ be given by \begin{equation*}
		\frac{d}{\alpha_0}=\frac{2}{q}+\frac{2d}{r}.
	\end{equation*}Define $\alpha^\ast(q,r)=\min\{\alpha_0,q/2\}$. We note that on $OAB$, $\alpha^\ast(q,r)=\alpha_0$; on $OCDA$, $\alpha^\ast(q,r)=q/2$; on the segment $[O,A]$, $\alpha^\ast(q,r)=\alpha_0=q/2$. 
	
	We write $\mathcal{O}_2(q,r,\alpha)$ to mean (\ref{orthonormal Strichartz}) holds for arbitrary orthonormal systems $(f_j)\subset L^2$ and arbitrary sequences $\lambda=(\lambda_j)\in\ell^\alpha$. Here we review some established results for the orthonormal Strichartz framework.
	
	\begin{thm}[\cite{Frank1,Frank2,Frank3,Bez,Bez2}]\label{review}
		The following are true.
		\begin{itemize}
			\item[(I).] Let $d=1$. Then, for $(1/r,1/q)\in(OAB)^o\cup[B,A)$, $\mathcal{O}_2(q,r,\alpha)$ holds if and only if $1\leq\alpha\leq\alpha^\ast(q,r)$; for $(1/r,1/q)\in(O,B)\cup(O,A)$, $\mathcal{O}_2(q,r,\alpha)$ holds if and only if $1\leq\alpha<\alpha^\ast(q,r)$.
			\item[(II).] Let $d\geq2$. Then, for $(1/r,1/q)\in(OAB)^o\cup[B,A)$, $\mathcal{O}_2(q,r,\alpha)$ holds if and only if $1\leq\alpha\leq\alpha^\ast(q,r)$; for $(1/r,1/q)\in(O,B)\cup(O,A]$, $\mathcal{O}_2(q,r,\alpha)$ holds if and only if $1\leq\alpha<\alpha^\ast(q,r)$.
			\item[(III).] Let $d\geq2$. Then, for $(1/r,1/q)\in(OCDA)^o\cup(D,A)\cup(O,C)$, $\mathcal{O}_2(q,r,\alpha)$ holds if $1\leq\alpha<\alpha^\ast(q,r)$, and fails if $\alpha>\alpha^\ast(q,r)$.
			\item[(IV).]Let $d\geq3$. Then, for $(1/r,1/q)\in(C,D]$, $\mathcal{O}_2(q,r,\alpha)$ holds if and only if $\alpha=1$.
		\end{itemize}
	\end{thm}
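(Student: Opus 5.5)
Since this theorem collects results from \cite{Frank1,Frank2,Frank3,Bez,Bez2}, I would reconstruct it in two halves: sufficiency by a duality and complex-interpolation argument in Schatten classes, and necessity by explicit families of orthonormal data.

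\textbf{Sufficiency.} The starting point is the duality principle of Frank--Sabin \cite{Frank3}. For the extension-type operator $T f=e^{it\Delta}|\nabla|^{-s}f$, the estimate $\mathcal{O}_2(q,r,\alpha)$ is equivalent to
\begin{equation*}
\|W_1TT^\ast W_2\|_{\mathfrak{S}^{\alpha'}}\lesssim\|W_1\|_{L_t^{2(q/2)'}L_x^{2(r/2)'}}\|W_2\|_{L_t^{2(q/2)'}L_x^{2(r/2)'}}.
\end{equation*}
Here $\mathfrak{S}^{\alpha'}$ is the Schatten class of order $\alpha'$, the dual exponent of $\alpha$. The kernel of $TT^\ast$ is $(4\pi i(t-t'))^{-d/2}e^{i|x-x'|^2/4(t-t')}$. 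I would embed it in the analytic family $T_z$ with time factor $|t-t'|^{-d/2+\ldots}$ multiplied by $(t-t')_+^{z}$-type weights, and then interpolate between two estimates.
\begin{itemize}
\item On $\mathrm{Re}\,z=0$, an $\mathfrak{S}^\infty$ bound with $L^\infty$ weights.
\item On the line where the kernel is $|t-t'|^{-\sigma}$ with a bounded oscillatory factor, an $\mathfrak{S}^2$ bound. The $\mathfrak{S}^2$ (Hilbert--Schmidt) norm equals the $L^2$ norm of the kernel, so by Hardy--Littlewood--Sobolev in $t$ it is controlled by mixed norms of $W_1,W_2$.
\end{itemize}
Stein interpolation then gives $\mathcal{O}_2(q,r,\alpha_0)$ on $(OAB)^o\cup[B,A)$. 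Interpolating with the trivial $\alpha=1$ bound gives the full range $1\le\alpha\le\alpha^\ast$ there. On the open segment $(O,A)$, and in $(OCDA)^o\cup(D,A)\cup(O,C)$, I would follow \cite{Bez,Bez2}. The idea is to prove frequency-localised estimates with $\alpha<q/2$, combining the $TT^\ast$ interpolation with a bilinear/restriction-type gain for unit-frequency pieces. One then sums the dyadic pieces using the $\varepsilon$ of room, which is exactly why only $\alpha<\alpha^\ast$ is reached.

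\textbf{Necessity.} I would use two counterexamples.
\begin{itemize}
\item \emph{Semiclassical example, giving $\alpha\le\alpha_0$.} Take $\sum_j|f_j\rangle\langle f_j|$ to be the projection $\mathbf 1_{|\xi|\le R}$ restricted to a box of size $L$, with $\lambda_j\equiv1$. The density is then constant $\sim R^d$ on a space-time region of size $L^d\times R^{-2}$-scaled. Comparing with $\|\lambda\|_{\ell^\alpha}=N^{1/\alpha}$, where $N\sim(LR)^d$, and letting $L\to\infty$ forces $d/\alpha\ge 2/q+2d/r$.
\item \emph{Time-translation example, giving $\alpha\le q/2$.} Take $f_j=e^{-it_j\Delta}g$ with $|t_j-t_k|\to\infty$. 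Dispersion makes these asymptotically orthogonal (after a Gram--Schmidt correction), and the left side decouples into $\|\lambda\|_{\ell^{q/2}}$.
\end{itemize}
At the endpoint $(O,A]$, where the two exponents coincide, I would refine the semiclassical example with a logarithmic divergence to rule out equality.

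\textbf{Part (IV) and the main obstacle.} For (IV), on $(C,D]$ one has $q=2$, so $\alpha^\ast=1$ and only the trivial case survives. I would show failure for every $\alpha>1$ by a Knapp-type superposition of time-translated wave packets that defeats the $L_t^1$ sum. I expect the hardest step to be the frequency-localised summation in (III), in particular keeping the loss only at the endpoint. If that fails, I would instead invoke the $\varepsilon$-loss Besov-summation argument of \cite{Bez}.
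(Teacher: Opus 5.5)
The paper does not prove this theorem; it is quoted from \cite{Frank1,Frank2,Frank3,Bez,Bez2}. Your outline therefore has to stand on its own, and its sufficiency part has a genuine gap in $(OAB)^o$.

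The kernel you write is that of $TT^\ast$ only when $s=0$. An $\mathfrak{S}^2$ endpoint in which the kernel is merely bounded by $|t-t'|^{-\sigma}$ yields weights in $L_t^{2u}L_x^2$. Interpolating against the $\mathfrak{S}^\infty$ endpoint with $L^\infty_{t,x}$ weights then forces $\alpha'=2(r/2)'$, i.e.\ $\alpha=2r/(r+2)$, whatever $q$ is. This equals $\alpha_0$ exactly when $s=0$. So your scheme reproduces the Frank--Sabin segment $[B,A)$, but at every point of $(OAB)^o$ one has $s>0$, hence $d/\alpha_0=2/q+2d/r<d/2+d/r$, and you get a strictly smaller exponent.

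Interpolating further with estimates that hold only for $\alpha<\alpha^\ast$ (on $(O,B)$, $(O,A)$ or in $OCDA$) cannot repair this. Since $1/\alpha_0$ is affine in $(1/r,1/q)$, the interpolated exponent falls strictly below $\alpha_0$. What is missing is a lossless summation of frequency-localised estimates. At unit frequency the sharp exponent does hold throughout $(OAB)^o$: interpolate the localised sharp-line bound with the localised $q=\infty$ bound, which holds with $\alpha=r/2$ by interpolating the Bessel-inequality bound ($\ell^\infty\to L^\infty_{t,x}$) with the trivial $\ell^1\to L^\infty_tL^1_x$ bound. The dyadic pieces are then summed by a Keel--Tao-type bilinear real interpolation, which works precisely because interior points leave room in the exponents. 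This is the kind of argument used in \cite{Bez}, and the one Subsection 3.2 adapts at $q=4$. You reserve that argument for regions where ``only $\alpha<\alpha^\ast$ is reached'', so the equality case in $(OAB)^o$ is left unproved.

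Your necessity argument also omits a case. Your two examples give $\alpha\le\alpha_0$ and $\alpha\le q/2$, and you propose a logarithmic refinement only on $(O,A]$, where these coincide. But (I) and (II) also assert that $\alpha=\alpha^\ast=r/2$ fails on $(O,B)$, where $q=\infty$ and $\alpha_0<q/2$. Nothing in your plan excludes equality there, and sufficiency for $\alpha<r/2$ on $(O,B)$ is not assigned to any argument either. Since $e^{it\Delta}$ preserves orthonormality, this case is a fixed-time inequality, with dual form $\|W|\nabla|^{-2s}\overline{W}\|_{\mathfrak{S}^{r/(r-2)}}\lesssim\|W\|_{L^{2r/(r-2)}}^2$, where $s=d/2-d/r$. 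Cwikel's inequality gives this in the weak Schatten class, which yields every $\alpha<r/2$. For $W\in C_c^\infty$, Weyl asymptotics give singular values $\sim n^{-(r-2)/r}$, so the strong norm diverges like the harmonic series.

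Two smaller points. First, in your semiclassical example the density is $\sim R^{d-2s}$ and persists for $|t|\lesssim L/R$, not on an $R^{-2}$ time scale; with the sizes you wrote, $L\to\infty$ only gives $\alpha\le r/2$. Second, (IV) needs no Knapp construction. Your time-translation example already forces $\alpha\le q/2=1$, and $\alpha=1$ is the classical Strichartz estimate (Keel--Tao at $D$).
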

	
	From Theorem \ref{review}, for $d\geq2$, what remains open is to establish whether $\mathcal{O}_2(q,r,q/2)$ holds or not for $(1/r,1/q)\in(OCDA)^o\cup(D,A)\cup(O,C)$. In this paper, we provide progress on this by proving the following.

	\begin{thm}\label{main1}
		Let $d\geq2$. Then, $\mathcal{O}_2(4,r,2)$ holds if $4d/(d-1)<r<\infty$.
	\end{thm}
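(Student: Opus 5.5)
The plan is to reduce the estimate to a Hilbert--Schmidt bound, using that $q/2=2$ so the time norm is $L^2$, and then compute that bound exactly on the Fourier side. Write $s=\frac d2-\frac dr-\frac12$, $p=r/2$ and $Tf(t,x)=e^{it\Delta}|\nabla|^{-s}f(x)$. With $\gamma=\sum_j\lambda_j|f_j\rangle\langle f_j|$, the density is $\rho_\gamma=\sum_j\lambda_j|Tf_j|^2$. By duality,
\begin{equation*}
\|\rho_\gamma\|_{L^2_tL^{p}_x}=\sup\Big\{\Big|\iint W\rho_\gamma\Big| : \|W\|_{L^2_tL^{p'}_x}\le1\Big\},\qquad \iint W\rho_\gamma=\operatorname{Tr}\big(\gamma\, T^*WT\big).
\end{equation*}
Since $\|\gamma\|_{\mathfrak S^2}=\|\lambda\|_{\ell^2}$, Hölder in Schatten classes reduces $\mathcal O_2(4,r,2)$ to
\begin{equation*}
\|T^*WT\|_{\mathfrak S^2}\lesssim\|W\|_{L^2_tL^{p'}_x}.
\end{equation*}
This reduction is exact and costs nothing, which is why the critical exponent $\alpha=q/2=2$ is reachable at $q=4$.

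Next I compute the Hilbert--Schmidt norm on the Fourier side. The operator $T^*WT$ has kernel $\widehat W(|\xi|^2-|\eta|^2,\xi-\eta)\,|\xi|^{-s}|\eta|^{-s}$ in frequency, with the space-time Fourier transform up to signs and constants. Substituting $k=\xi-\eta$ and $\tau=|\xi|^2-|\xi-k|^2=2\xi\cdot k-|k|^2$ gives
\begin{equation*}
\|T^*WT\|_{\mathfrak S^2}^2=\iint|\widehat W(\tau,k)|^2\,m(\tau,k)\,d\tau\,dk,
\end{equation*}
where
\begin{equation*}
m(\tau,k)=\frac{1}{2|k|}\int_{\{\xi\cdot k=(\tau+|k|^2)/2\}}|\xi|^{-2s}|\xi-k|^{-2s}\,d\sigma(\xi).
\end{equation*}
On the hyperplane write $\xi=y+a\,k/|k|$ with $y\in\mathbb R^{d-1}$, where $a$ and $b=a-|k|$ are the signed distances of the hyperplane from $0$ and from $k$. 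The integral then becomes $\int_{\mathbb R^{d-1}}(|y|^2+a^2)^{-s}(|y|^2+b^2)^{-s}\,dy$.

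This integral converges at infinity exactly when $4s>d-1$, and this is equivalent to $r>4d/(d-1)$, precisely the hypothesis. It is locally integrable since $2s<d-1$ for $r<\infty$. By scaling it is bounded by $C\max(|a|,|b|)^{d-1-4s}$. Because the exponent is negative and $\max(|a|,|b|)\ge|k|/2$, we get the uniform bound
\begin{equation*}
m(\tau,k)\lesssim|k|^{d-2-4s},
\end{equation*}
independent of $\tau$. Checking this estimate carefully is the main technical step. In particular I must verify the uniformity when $a$ or $b$ is near $0$, i.e.\ when the hyperplane passes near $0$ or $k$, where one factor is singular. I would handle it by splitting $|y|\lesssim|k|$ and $|y|\gtrsim|k|$.

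Finally, Plancherel in $\tau$ gives
\begin{equation*}
\|T^*WT\|_{\mathfrak S^2}^2\lesssim\int\|W(t)\|_{\dot H^{-\sigma}_x}^2\,dt,\qquad \sigma=\frac{4s+2-d}{2}=\frac d2-\frac{2d}{r}.
\end{equation*}
This $\sigma$ is exactly $d(\frac1{p'}-\frac12)$. Since $r<\infty$ we have $1<p'<2$ and $0<\sigma<d/2$, so the dual Sobolev (Hardy--Littlewood--Sobolev) embedding $L^{p'}_x\hookrightarrow\dot H^{-\sigma}_x$ holds. Applying it for each $t$ and integrating yields $\|T^*WT\|_{\mathfrak S^2}\lesssim\|W\|_{L^2_tL^{p'}_x}$, completing the proof. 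The restriction $r<\infty$ is used only here, to avoid the $p'=1$ endpoint of HLS. The same scheme should extend to fractional propagators, with $|k|$ in the Jacobian replaced by the gradient of the dispersion difference.
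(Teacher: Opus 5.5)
Your argument is correct and is essentially the paper's first proof (Section 2, following Chen--Hong--Pavlovi\'c), written in dual form: your uniform bound $m(\tau,k)\lesssim|k|^{d-2-4s}$ is equivalent to (\ref{CHP4}) with $\omega=s$ and $\omega_1+\frac12=\sigma$, and your dual embedding $L^{p'}_x\hookrightarrow\dot H^{-\sigma}_x$ plays the role of the paper's Sobolev embedding $\dot H^{\omega_1+\frac12}_x\hookrightarrow L^{r/2}_x$ applied to $\rho_{\gamma(t)}$. One small correction to your closing remark: $r<\infty$ is used not only in HLS but also for the local integrability $2s<d-1$ of the hyperplane integral (the paper's condition $\omega<(d-1)/2$), since at $r=\infty$ the uniform bound on $m$ fails logarithmically as the hyperplane approaches $0$ or $k$.
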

	
	\begin{rem}
		When $d\geq2$, the condition $q=4$, $4d/(d-1)<r<\infty$ corresponds to the segment $(M,N)$.
	\end{rem}
	\begin{rem}
		The authors of \cite{Feng} stated that when $d\geq2$,  $\mathcal{O}_2(q,r,\alpha^\ast(q,r))$ holds on the entire region of $(OCDA)^o$, by combining the argument in \cite{Bez} with the following real interpolation identity\begin{equation}\label{error}
			(L^{q_0,\infty}L^{p_0},L^{q_1,\infty}L^{p_1})_{\theta,q}=L^qL^{p,q},
		\end{equation}where $1/p=(1-\theta)/p_0+\theta/p_1$, $1/q=(1-\theta)/q_0+\theta/q_1$, whenever $p_0,p_1,q_0,q_1,p,q\in[1,\infty]$, $\theta\in(0,1)$ satisfy $p_0\neq p_1$, $q_0\neq q_1$. (See, \cite{Feng}, (2.2).) However, to the best of our knowledge, the identity (\ref{error}) is generally false. The simplist counterexample can be given by the case $p_0=q_0$, $p_1=q_1$, if we compute norms of the specific function $f(t,x) = 1_{(0,1)}(t)1_{E_t}(x)$, where $E_t$ is a rectangle with dimensions $1 \times 1/t$, so that its Lebesgue measure is $|E_t| = 1/t$. It is easy to show that $f\in L^{q_0,\infty}L^{p_0} $ and $f\in L^{q_1,\infty}L^{p_1} $. By the fundamental properties of real interpolation, the intersection of two spaces is continuously embedded into their interpolation space for any $\theta \in (0,1)$. Therefore, $f \in (L^{q_0,\infty}L^{p_0},L^{q_1,\infty}L^{p_1})_{\theta,q}$. On the other hand, our assumption $p_0=q_0$ and $p_1=q_1$ forces $q=p$, which means $\|f\|_{L_t^qL_x^{p,q}}=\|f\|_{L_t^pL_x^{p}}=\infty$, and thus we show that (\ref{error}) is not true in general.
	\end{rem}
	We will provide two completely different proofs of Theorem \ref{main1}. The first is based on estimates established by Chen--Hong--Pavlović \cite{Chen} and we provide the details in Section 2. The second proof is inspired by ideas in Keel--Tao \cite{Keel-Tao}. The main advantage of the second proof is that it is robust and we use this robustness to extend Theorem \ref{main1} to the class of fractional Schrödinger propagators.
	
	\vspace{2ex}
	
	The \textit{fractional Schrödinger propagator} $e^{it|\nabla|^{\mu}}$ on $\mathbb{R}^d$ is defined by the following \begin{equation*}
		e^{it|\nabla|^{\mu}} f(x)\simeq_d\int_{\mathbb{R}^d}e^{i(x\cdot\xi+t|\xi|^\mu)}\widehat{f}(\xi)d\xi.
	\end{equation*}We note that $\mu=2$ corresponds to $e^{-it\Delta}$. The fractional Schrödinger equation, formulated by Laskin \cite{Laskin1} to generalize standard quantum mechanics to Lévy processes, has attracted a lot of attention in recent years. 
	(See, \cite{Laskin1,Laskin2}.) 
	
	\vspace{2ex}
	
	Strichartz estimates for fractional Schrödinger propagators are given by the following form. \begin{equation}\label{classical frac Strichartz}
		\left\Vert e^{it|\nabla|^{\mu}}|\nabla|^{-s_\mu}f\right\Vert_{L_t^qL_x^r}\lesssim\Vert f\Vert_{L^2}.
	\end{equation}We remark that for $\mu\in\mathbb{R}\setminus\{0,1\}$, the inequality (\ref{classical frac Strichartz}) holds if and only if $(q,r)$ is an admissible pair. (See, \cite{Z.Guo}.) Inequalities of the following form are called the \textit{orthonormal Strichartz estimates} for the fractional Schrödinger propagator  \begin{equation}\label{XXX}
		\left\Vert\sum_j\lambda_j\left|e^{it|\nabla|^{\mu}}|\nabla|^{-s_\mu}f_j\right|^2\right\Vert_{L_t^\frac{q}{2}L_x^\frac{r}{2}}\lesssim\Vert\lambda\Vert_{\ell^\alpha},
	\end{equation}where $(f_j)\subset L^2$ is an arbitrary orthonormal system, $\lambda=(\lambda_j)$ is an arbitrary sequence, and the scaling exponent $s_\mu$ is given by\begin{equation*}
		s_\mu=\frac{d}{2}-\frac{d}{r}-\frac{\mu}{q}.
	\end{equation*}We write $\mathcal{O}_\mu(q,r,\alpha)$ to mean (\ref{XXX}) holds for arbitrary orthonormal systems $(f_j)\subset L^2$ and arbitrary sequences $\lambda=(\lambda_j)\in\ell^\alpha$. For $\mu>0$, $\mu\neq1$, and except some critical cases, it is well understood when $\mathcal{O}_\mu(q,r,\alpha)$ holds. (See, \cite{Bez2, Bez3}.) However, $\mathcal{O}_\mu(q,r,q/2)$ remains open on $(OCDA)^o$ and in this paper we extend Theorem \ref{main1} to all $\mu>0$, $\mu\neq1$.
	
	\begin{thm}\label{main2}
		Let $d\geq2$, $\mu>0$, $\mu\neq1$. Then, $\mathcal{O}_\mu(4,r,2)$ holds if $4d/(d-1)<r<\infty$.
	\end{thm}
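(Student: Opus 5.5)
The plan is to dualize, use that $\alpha=q/2=2$ is the Hilbert--Schmidt case, and reduce Theorem \ref{main2} to a bilinear space-time estimate for an explicit positive kernel. The time integration in that estimate is exactly at the failing endpoint of Hardy--Littlewood--Sobolev, so we would handle it with a Keel--Tao type dyadic decomposition and bilinear interpolation.

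\emph{Reduction.} Put $T=e^{it|\nabla|^{\mu}}|\nabla|^{-s_\mu}$ and $p=(r/2)'$, so that $1/p'=2/r$ and $1<p<2$ when $4d/(d-1)<r<\infty$. By the standard Frank--Sabin duality principle, $\mathcal{O}_\mu(4,r,2)$ is equivalent to
\begin{equation*}
\left\|WTT^\ast\overline{W}\right\|_{\mathcal{S}^2}\lesssim\|W\|_{L_t^{4}L_x^{2p}}^2 .
\end{equation*}
The Hilbert--Schmidt norm is explicit. With $K(t,x)$ the kernel of $e^{it|\nabla|^\mu}|\nabla|^{-2s_\mu}$ and $V=|W|^2$, the estimate becomes
\begin{equation*}
\int\!\!\int V(t,x)\,|K(t-t',x-y)|^2\,V(t',y)\,dx\,dy\,dt\,dt'\lesssim\|V\|_{L_t^2L_x^{p}}^2 .
\end{equation*}
It is convenient to prove the stronger bound with $L^2_tL^{p,2}_x$ on the right. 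Since $p<2$, we have $L^p\subset L^{p,2}$, so the stronger bound implies the one needed.

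\emph{Kernel bounds.} By scaling, $K(t,x)=|t|^{-(d-2s_\mu)/\mu}\,k_\pm(x|t|^{-1/\mu})$, where
\begin{equation*}
k_\pm(z)=\int e^{i(z\cdot\xi\pm|\xi|^\mu)}|\xi|^{-2s_\mu}\,d\xi .
\end{equation*}
Here $0<2s_\mu<d$ on the open segment. Splitting the frequency integral dyadically and using stationary phase for $\mu\neq1$, I expect the following bounds:
\begin{itemize}
\item $|k(z)|\lesssim|z|^{-(d-2s_\mu)}$ for $|z|\lesssim1$;
\item $|k(z)|\lesssim|z|^{-\gamma}$ for $|z|\gtrsim1$, with an explicit $\gamma$ coming from the critical frequency $|\xi|\sim|z|^{1/(\mu-1)}$.
\end{itemize}
The condition $r>4d/(d-1)$, which places us below $N$, should be exactly what makes $|k|^2$ lie in $L^{a}$ for all $1/a$ in a small open interval around $1-4/r$. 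The condition $r<\infty$ keeps us away from $M$, where this interval degenerates.

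\emph{Dyadic time pieces.} For $j\in\mathbb{Z}$, let $B_j(V_1,V_2)$ be the bilinear form above restricted to $|t-t'|\sim2^j$. For fixed $t,t'$, Young's inequality in $x$ gives
\begin{equation*}
\int\!\!\int V_1(t,x)\,|K(t-t',x-y)|^2\,V_2(t',y)\,dx\,dy\lesssim\big\||K(t-t')|^2\big\|_{L^a}\,\|V_1(t)\|_{L^{p_1}}\|V_2(t')\|_{L^{p_2}}
\end{equation*}
whenever $1/p_1+1/p_2=1+1/a$. Moreover $\||K(t)|^2\|_{L^a}\sim|t|^{-1-\beta}$, where $\beta=\beta(p_1,p_2)$ is affine in $(1/p_1,1/p_2)$ and vanishes at $p_1=p_2=p$. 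Integrating in $t'$ over $|t-t'|\sim2^j$ and using Cauchy--Schwarz in time, the factor $2^{-j}$ cancels the measure of the time window $2^{j}$. This yields
\begin{equation*}
|B_j(V_1,V_2)|\lesssim2^{-j\beta(p_1,p_2)}\|V_1\|_{L^2_tL^{p_1}_x}\|V_2\|_{L^2_tL^{p_2}_x}
\end{equation*}
for $(1/p_1,1/p_2)$ in a neighbourhood of $(1/p,1/p)$.

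\emph{Summing in $j$.} Since $\beta$ takes both signs near the diagonal, Keel--Tao's bilinear real interpolation lemma (the interpolation of $\ell^\infty_{\beta}$-weighted bounds used for their endpoint) sums the pieces over $j$. It gives $\sum_j|B_j(V_1,V_2)|\lesssim\|V_1\|_{L^2_tL^{p,2}_x}\|V_2\|_{L^2_tL^{p,2}_x}$. The time exponent stays $2$ throughout, so the interpolation only acts on the spatial exponents, as in Keel--Tao, where their $L^{2}$ Lorentz exponent matches ours.

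\emph{Expected obstacle.} The main difficulty is the kernel step: proving the two-sided pointwise bounds on $k_\pm$ uniformly for all $\mu>0$, $\mu\neq1$, including $0<\mu<1$ where the critical frequency behaves differently. One must check that the admissible range of $a$ is genuinely open around $1-4/r$ precisely when $4d/(d-1)<r<\infty$. The remaining steps should be routine once these bounds are in hand. A secondary point is checking that the Keel--Tao interpolation yields the Lorentz exponent $2$ in space, which is what the embedding $L^p\subset L^{p,2}$ requires.
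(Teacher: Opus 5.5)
\textbf{There is a genuine gap: the kernel step, which is where the hypothesis $r>4d/(d-1)$ is actually used, is not supplied, and the version you sketch would not work.}

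Your architecture is the paper's. It uses Hilbert--Schmidt duality, Young's inequality in $x$, dyadic pieces $|t-t'|\sim 2^k$ with weights $2^{-k\gamma}$, the Bergh--L\"ofstr\"om (Keel--Tao) bilinear interpolation into $\ell^1$ on $L^2_tL^{p,2}_x$, and $L^p\subset L^{p,2}$. Replacing the paper's dyadic frequency pieces $I^\mu_j$ and its six operators $\mathcal{T}_{w,k}$ by the exact scaling $K(t,x)=|t|^{-(d-2s_\mu)/\mu}k(x|t|^{-1/\mu})$ would be a real simplification: everything reduces to one integrability property of the profile $k$, uniformly in $\mu$. The first problem is the Young exponent. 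Young's inequality for $\iint V_1(x)|K(x-y)|^2V_2(y)\,dx\,dy$ requires $\frac{1}{p_1}+\frac{1}{p_2}+\frac{1}{a}=2$, not $\frac{1}{p_1}+\frac{1}{p_2}=1+\frac{1}{a}$. With the correct relation, $\||K(t)|^2\|_{L^a}=|t|^{-1+\frac{d}{\mu}(2-\frac{1}{p_1}-\frac{1}{p_2}-\frac{4}{r})}\||k|^2\|_{L^a}$, whose power is exactly $-1$ on the diagonal. With your relation it is not, which contradicts your claim $\beta(p,p)=0$. So the correct target is $k\in L^{\tilde q}$ for all $\tilde q$ near $r/2$, i.e.\ $1/a$ near $4/r$, not $1/a$ near $1-4/r$.

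Second, the kernel bounds you expect cannot deliver this. Even if true, $|k(z)|\lesssim|z|^{-(d-2s_\mu)}$ for $|z|\lesssim1$ is never locally in $L^{r/2}$, because $(d-2s_\mu)\frac{r}{2}=d+\frac{\mu r}{4}>d$. So with your stated bounds the argument fails for every $\mu$. The correct behaviour depends on $\mu$:
\begin{itemize}
\item For $\mu>1$, $k$ is bounded near $0$. The hypothesis enters at infinity through the stationary point $|\xi|\sim|z|^{1/(\mu-1)}$, giving $|k(z)|\lesssim|z|^{-(\frac{\mu(d-1)}{2}-\frac{2d}{r})/(\mu-1)}+|z|^{-(d-2s_\mu)}$.
\item For $0<\mu<1$, the critical frequency $|\xi|\sim|z|^{-1/(1-\mu)}$ escapes to infinity as $z\to0$. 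The decisive feature is the local singularity $|k(z)|\lesssim|z|^{-(\frac{2d}{r}-\frac{\mu(d-1)}{2})/(1-\mu)}$; this is exactly the origin of the paper's condition (\ref{48}) in $\mathcal{T}_{1,k}$. At infinity only $|z|^{-(d-2s_\mu)}$ appears.
\end{itemize}
In both regimes, the (open) $L^{\tilde q}$-integrability for $\tilde q$ near $r/2$ holds precisely because $r>4d/(d-1)$. These stationary and non-stationary phase bounds, the analogues of (\ref{44})--(\ref{47}), are the substance of the proof. Your proposal leaves them as an expectation, and the small-$|z|$ behaviour you predict is too weak. You would also need to define $k$ as a limit of frequency-truncated integrals, since it is not absolutely convergent; the paper handles this with $P_\mu$ and rescaling.
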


	\begin{rem}
		According to a duality principle (Lemma 3, \cite{Frank3}) proved by Frank--Sabin, the inequality $\mathcal{O}_\mu(4,r,2)$ is equivalent with \begin{equation}\label{dualise1}
			\left\Vert We^{it|\nabla|^{\mu}}|\nabla|^{-2s_\mu}e^{-it|\nabla|^{\mu}}\overline{W}\right\Vert_{\mathfrak{S}^2}\lesssim\Vert W\Vert_{L_t^4 L_x^{\widetilde{r}}}^2,\text{ for all } W \in L_t^4 L_x^{\widetilde{r}},
		\end{equation}where $\widetilde{r}$ satisfies $1/\widetilde{r}=1/2-1/r$. Here, $\mathfrak{S}^2=\mathfrak{S}^2(L^2)$ is a Hilbert--Schmidt space, which is the most accessible Schatten space from a computational viewpoint, and this explains why we restrict to the line segment $(M,N)$. Notice that (\ref{dualise1}) has a natural bilinear structure and this will be important when we use ideas of Keel--Tao \cite{Keel-Tao} to prove Theorem \ref{main1} and Theorem \ref{main2}.
	\end{rem}

	From now on, we let $d\geq2$ and let $(q,r)$ be admissible. In this paper, we do not distinguish in the notation between functions $W$ and operators of multiplication by $W$. We recall that $A\lesssim B$ means $A\leq CB$ where $C$ is a positive constant, more generally, $A\lesssim_{q,r}B$ means $A\leq C_{q,r}B$ where $C_{q,r}$ is a positive constant depending only on $q$ and $r$. Also, $A\sim B$ means $A\lesssim B$ and $B\lesssim A$, $A\simeq B$ means $A=CB$ where $C$ is a positive constant. For an operator $T:L^2\longrightarrow L^2$, we define the \textit{Schatten $p$-norm} $\|T\|_{\mathfrak{S}^p}=\|T\|_{\mathfrak{S}^p(L^2)}=\|\left(\lambda_n(T)\right)\|_{\ell^p}$, where $\lambda_n(T)$ are the square roots of the eigenvalues of $T^\ast T$. The \textit{Schatten space} for $L^2$ is defined as $\mathfrak{S}^p=\mathfrak{S}^p(L^2)=\{T:L^2\longrightarrow L^2;\|T\|_{\mathfrak{S}^p}<\infty\}$. We recall that the Schatten $2$-norm is a Hilbert--Schmidt norm, which means $\|T\|_{\mathfrak{S}^2}=\|\ker_T\|_{L^2}$, where $\ker_T$ is the integral kernel function of $T$. For further details about Schatten spaces, we refer the reader to \cite{Simon}.

	\section{A proof of Theorem \ref{main1} following \cite{Chen}.}
	In this section, we introduce a proof of Theorem \ref{main1} following the original method in \cite{Chen}. For a fixed orthonormal system $(f_j)\subset L^2$, for $\lambda_1,\ldots,\lambda_N\in\mathbb{C}$, define \begin{equation*}
		\gamma_0f=\sum_{j=1}^{N}\lambda_j\left<f,f_j\right>_{L^2}f_j,
	\end{equation*}where $\left<\cdot,\cdot\right>_{L^2}$ is the standard inner product on $L^2$. We also define the Schrödinger flow $\gamma(t)=e^{it\Delta}\gamma_0e^{-it\Delta}$. Indeed, $\gamma(t)$ satisfies the quantum Liouville equation (the von Neumann-Schrödinger equation) $i\dot{\gamma}(t)=[-\Delta,\gamma(t)]$, and we refer the reader to \cite{Chen,Frank3} for further physical details about this equation and the Hartree equation. 
	
	\vspace{2ex}
	
	Consider $\gamma(t)$. The orthogonality of $(f_j)$ yields\begin{equation*}
		\gamma(t)f(x)=e^{it\Delta}\gamma_0e^{-it\Delta}f(x)=\int_{\mathbb{R}^d}f(y)\sum_{j=1}^N\lambda_j\overline{e^{it\Delta}f_j(y)}e^{it\Delta}f_j(x)dy.
	\end{equation*}Thus, \begin{equation*}
		\ker_{\gamma(t)}(x,x)=\sum_{j=1}^N\lambda_j\left|e^{it\Delta}f_j(x)\right|^2,
	\end{equation*}and also, we derive\begin{equation*}
		\|\gamma_0\|_{\mathfrak{S}^\alpha}=\left(\sum_{j=1}^N|\lambda_j|^\alpha\right)^\frac{1}{\alpha}.
	\end{equation*}
	
	\vspace{2ex}
	
	By introducing the density function $\rho_{\gamma(t)}(x)=\ker_{\gamma(t)}(x,x)$ and following this structure, we reformulate $\mathcal{O}_2(q,r,\alpha)$ as \begin{equation}\label{gamma1}
		\left\|\rho_{\gamma(t)}\right\|_{L_t^\frac{q}{2}L_x^\frac{r}{2}}\lesssim\||\nabla|^{s}\gamma_0|\nabla|^{s}\|_{\mathfrak{S}^\alpha},\text{ for all } \gamma_0\in\mathfrak{S}^\alpha.
	\end{equation}We will see that to show $\mathcal{O}_2(4,r,2)$, it suffices to prove the following estimate.
	\begin{thm}\label{CHPs}
		Let $d\geq2$. Suppose $\omega,\omega_1$ satisfy\begin{equation*}
			\frac{d-1}{4}<\omega<\frac{d-1}{2};\quad 2\omega_1-4\omega+d-1=0.
		\end{equation*}Then, the inequality \begin{equation}\label{gamma2}
			\||\nabla|^{\omega_1+\frac{1}{2}}\rho_{\gamma(t)}\|_{L_t^2L_x^2}\lesssim\||\nabla|^\omega\gamma_0|\nabla|^\omega\|_{\mathfrak{S}^2}
		\end{equation}is true for every $\gamma_0\in\mathfrak{S}^2$. 
	\end{thm}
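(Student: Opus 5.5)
The plan is to prove (\ref{gamma2}) by an explicit space-time Fourier computation, in the spirit of Klainerman--Machedon and Chen--Hong--Pavlović, using that the Hilbert--Schmidt norm is the $L^2$ norm of the kernel. Write $\widehat{\gamma_0}(\xi,\eta)$ for the Fourier transform of the kernel of $\gamma_0$, with the sign convention chosen so that $\||\nabla|^\omega\gamma_0|\nabla|^\omega\|_{\mathfrak{S}^2}^2\simeq\int\!\!\int|\xi|^{2\omega}|\eta|^{2\omega}|\widehat{\gamma_0}(\xi,\eta)|^2\,d\xi\,d\eta$. Since $\gamma(t)=e^{it\Delta}\gamma_0e^{-it\Delta}$, we get
\begin{equation*}
\rho_{\gamma(t)}(x)\simeq\int\!\!\int e^{ix\cdot(\xi-\eta)}e^{-it(|\xi|^2-|\eta|^2)}\widehat{\gamma_0}(\xi,\eta)\,d\xi\,d\eta .
\end{equation*}
Taking the space-time Fourier transform in $(t,x)\mapsto(\tau,k)$ and substituting $\xi-\eta=k$ gives
\begin{equation*}
\widetilde{\rho}(\tau,k)\simeq\int\delta\big(\tau+|\xi|^2-|\xi-k|^2\big)\,\widehat{\gamma_0}(\xi,\xi-k)\,d\xi .
\end{equation*}
By Plancherel in $(t,x)$, the square of the left side of (\ref{gamma2}) equals $\int\!\!\int|k|^{2\omega_1+1}|\widetilde\rho(\tau,k)|^2\,d\tau\,dk$.

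Next apply Cauchy--Schwarz inside the $\delta$-integral with weight $|\xi|^{2\omega}|\xi-k|^{2\omega}$:
\begin{equation*}
|\widetilde\rho(\tau,k)|^2\lesssim I(\tau,k)\int\delta(\cdots)\,|\xi|^{2\omega}|\xi-k|^{2\omega}|\widehat{\gamma_0}(\xi,\xi-k)|^2\,d\xi ,
\end{equation*}
where
\begin{equation*}
I(\tau,k)=\int\delta\big(\tau+|\xi|^2-|\xi-k|^2\big)|\xi|^{-2\omega}|\xi-k|^{-2\omega}\,d\xi .
\end{equation*}
If we can show $\sup_{\tau,k}|k|^{2\omega_1+1}I(\tau,k)<\infty$, then integrating first in $\tau$ (which removes the $\delta$) and then in $k,\xi$ bounds the left side by $\int\!\!\int|\xi|^{2\omega}|\xi-k|^{2\omega}|\widehat{\gamma_0}(\xi,\xi-k)|^2\,d\xi\,dk$, and this is exactly $\||\nabla|^\omega\gamma_0|\nabla|^\omega\|_{\mathfrak{S}^2}^2$. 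This proves the theorem.

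To bound $I$, note that the argument of $\delta$ is $\tau+2\xi\cdot k-|k|^2$. So the $\delta$ restricts $\xi$ to a hyperplane orthogonal to $k$, with Jacobian $(2|k|)^{-1}$. Rescaling $\xi=|k|\zeta$ and writing $e=k/|k|$ gives
\begin{equation*}
I(\tau,k)\simeq|k|^{d-2-4\omega}J(a),\qquad J(a)=\int_{\{\zeta\cdot e=a\}}|\zeta|^{-2\omega}|\zeta-e|^{-2\omega}\,d\sigma(\zeta),
\end{equation*}
for some $a\in\mathbb{R}$ depending on $\tau,k$. The hypothesis $2\omega_1-4\omega+d-1=0$ makes the total power $|k|^{2\omega_1+1+d-2-4\omega}=|k|^0$, so everything reduces to $\sup_a J(a)<\infty$.

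I expect this uniform bound to be the main point. It is also where both constraints on $\omega$ enter. Parametrize the plane by $\zeta=ae+y$ with $y\in\mathbb{R}^{d-1}$, so that $|\zeta|\geq|y|$ and $|\zeta-e|\geq|y|$. Moreover, $\max(|\zeta|,|\zeta-e|)\geq1/2$, so the two singularities never coalesce.
\begin{itemize}
\item On $|y|\leq1$, the integrand is at most $2^{2\omega}|y|^{-2\omega}$. This is integrable over the unit ball of $\mathbb{R}^{d-1}$ because $2\omega<d-1$.
\item On $|y|\geq1$, the integrand is at most $|y|^{-4\omega}$. This is integrable at infinity because $4\omega>d-1$.
\end{itemize}
Both bounds are independent of $a$, which gives $\sup_aJ(a)<\infty$. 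The remaining care is in justifying the $\delta$-manipulations, for instance by first taking $\gamma_0$ with smooth compactly supported kernel away from the origin and then arguing by density.
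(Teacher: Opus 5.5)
Your proposal is correct and follows essentially the same route as the paper. Both arguments use the space-time Fourier representation of $\rho_{\gamma(t)}$ with the constraint $\tau+2\xi\cdot k-|k|^2=0$, then Cauchy--Schwarz with weight $|\xi|^{2\omega}|\xi-k|^{2\omega}$ (the paper phrases this as duality against $V\in L_t^2L_x^2$), and then a uniform bound on the $(d-1)$-dimensional hyperplane integral, split so that $\omega<(d-1)/2$ controls the local singularity and $\omega>(d-1)/4$ controls the tail. The only difference is cosmetic: you rescale $\xi=|k|\zeta$ first, which makes the cancellation from $2\omega_1-4\omega+d-1=0$ explicit, whereas the paper tracks the powers of $|\xi|$ in each region directly.
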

	
	\begin{rem}
		Let us see how Theorem \ref{CHPs} implies Theorem \ref{main1}. Let $s=d/2-d/r-1/2$ and define $\omega_1$ such that\begin{equation*}
			\frac{2}{r}=\frac{1}{2}-\frac{2\omega_1+1}{2d}.
		\end{equation*}By applying the Sobolev embedding theorem, we obtain\begin{equation*}
			\|\rho_{\gamma(t)}\|_{L_t^2L_x^\frac{r}{2}}\lesssim\||\nabla|^{\omega_1+\frac{1}{2}}\rho_{\gamma(t)}\|_{L_t^2L_x^2}.
		\end{equation*}On the other hand, Theorem \ref{CHPs} implies \begin{equation*}
			\||\nabla|^{\omega_1+\frac{1}{2}}\rho_{\gamma(t)}\|_{L_t^2L_x^2}\lesssim\||\nabla|^s\gamma_0|\nabla|^s\|_{\mathfrak{S}^2}
		\end{equation*}for all $4d/(d-1)<r<\infty$. Thus, by (\ref{gamma1}), we see that $\mathcal{O}_2(4,r,2)$ holds. In other words, Theorem \ref{main1} follows from Theorem \ref{CHPs}.
	\end{rem}
	
	\begin{rem}
		The following inequality (see, \cite[(3.17)]{Chen}) has been shown to be true and plays a key role during the global well-posedness study by Chen--Hong--Pavlović in \cite{Chen}\begin{equation*}
			\left\|\left<\nabla\right>^{\omega_1}|\nabla|^{\frac{1}{2}}\rho_{\gamma(t)}\right\|_{L_t^2L_x^2}\lesssim\|\left<\nabla\right>^\omega\gamma_0\left<\nabla\right>^\omega\|_{\mathfrak{S}^2},
		\end{equation*}where \begin{equation*}
			d\geq2;\qquad\frac{d-1}{4}<\omega<\frac{d-1}{2};\qquad 2\omega_1-4\omega+d-1=0.
		\end{equation*}The following proof completely mirrors the original proof of (3.17) in \cite{Chen}. We include details in order to clarify that the inhomogeneous derivatives can be replaced by homogeneous derivatives. Also, by giving the details, we can explain why the proof seems difficult to extend to other propagators, such as the fractional Schrödinger propagators.
	\end{rem}
	
	\begin{proof}[Proof of Theorem \ref{CHPs}.]
		Denote $F(t,x)=\rho_{\gamma(t)}(x)$, and $K(x,x')$ as the kernel of $\gamma_0$. Note that $F(t,x)$ is formed as \begin{equation*}
			F(t,x)\simeq\int_{\mathbb{R}^{2d}}e^{-it(|\xi_1|^2-|\xi_2|^2)}\widehat{K}(\xi_1,\xi_2)e^{ix\cdot(\xi_1+\xi_2)}d\xi_1d\xi_2.
		\end{equation*}Then, by changing variables $\xi=\xi_1+\xi_2$, we obtain\begin{equation*}
			\widetilde{F}(\tau,\xi)\simeq\int_{\mathbb{R}^d}\widehat{K}(\xi_1,\xi-\xi_1)\delta(\tau+|\xi_1|^2-|\xi-\xi_1|^2)d\xi_1.
		\end{equation*}Here, $\widetilde{F}$ is the Fourier transform of $F$ on $\mathbb{R}\times\mathbb{R}^d$, and $\delta$ is the delta distribution supported at zero. On the other hand, we use duality to rewrite (\ref{gamma2}) as  \begin{equation*}
			\left|\langle|\nabla|^{\omega_1+\frac{1}{2}}F(t,x),V(t,x)\rangle_{L_t^2L_x^2}\right|\lesssim\||\nabla_x|^\omega|\nabla_{x'}|^\omega K\|_{L_x^2L_{x'}^2}\|V\|_{L_t^2L_x^2},\text{ for all } V\in L_t^2L_x^2.
		\end{equation*}Plancherel's theorem thus implies that (\ref{gamma2}) is equivalent with\begin{equation}\label{CHP2}
			\left|\langle|\xi|^{\omega_1+\frac{1}{2}}\widetilde{F}(\tau,\xi),\widetilde{V}(\tau,\xi)\rangle_{L_\tau^2L_\xi^2}\right|\lesssim\||\xi_1|^\omega|\xi_2|^\omega \widehat{K}\|_{L_{\xi_1}^2L_{\xi_2}^2}\|V\|_{L_t^2L_x^2}, \text{ for all }V\in L_t^2L_x^2.
		\end{equation}Since \begin{equation*}
			\left|\langle|\xi|^{\omega_1+\frac{1}{2}}\widetilde{F}(\tau,\xi),\widetilde{V}(\tau,\xi)\rangle_{L_\tau^2L_\xi^2}\right|\simeq\left|\int_{\mathbb{R}^{2d}}|\xi|^{\omega_1+\frac{1}{2}}\widehat{K}(\xi_1,\xi-\xi_1)\overline{\widetilde{V}(-|\xi_1|^2+|\xi-\xi_1|^2,\xi)}d\xi d\xi_1\right|,
		\end{equation*}we see that to show (\ref{CHP2}), it suffices to show\begin{equation}\label{CHP3}
			\sqrt{I}:=\left\|\frac{|\xi|^{\omega_1+\frac{1}{2}}\widetilde{V}(|\xi|^2-2\xi\cdot\xi_1,\xi)}{|\xi_1|^\omega|\xi-\xi_1|^\omega}\right\|_{L_{\xi_1}^2L_{\xi}^2}\lesssim\|V\|_{L_t^2L_x^2},
		\end{equation}where we used the fact that\begin{equation}\label{fact}
			-|\xi_1|^2+|\xi-\xi_1|^2=|\xi|^2-2\xi\cdot\xi_1.
		\end{equation}To prove (\ref{CHP3}), first observe that there exists some $\eta=\eta(\xi)=(\eta_1,\ldots,\eta_d)\in\mathbb{R}^d$ such that $\xi_1=\eta_1\mathbf{e}_1+\cdots+\eta_d\mathbf{e}_d$, where $(\mathbf{e}_j)$ is an orthonormal basis of $\mathbb{R}^d$ which satisfies $\xi\bot\text{span}(\mathbf{e}_2,\ldots,\mathbf{e}_d)$. We have $\xi=|\xi|\mathbf{e}_1$, consequently,\begin{equation*}
			\xi\cdot\xi_1=\eta_1|\xi|;\qquad|\xi-\xi_1|=|(\eta_1-|\xi|,\eta')|,
		\end{equation*}where $\eta'=(\eta_2,\cdots,\eta_d)\in\mathbb{R}^{d-1}$. Regarding $I$, we then write \begin{equation*}
			I=\int_{\mathbb{R}^d}\left(\int_{\mathbb{R}^{d-1}}\int_{\mathbb{R}}\frac{|\xi|^{2\omega_1+1}|\widetilde{V}(|\xi|^2-2\eta_1|\xi|,\xi)|^2}{|\eta|^{2\omega}|(\eta_1-|\xi|,\eta')|^{2\omega}}d\eta_1d\eta'\right)d\xi.
		\end{equation*}For a fixed $\xi$, we set $\tau=|\xi|^2-2\eta_1^\ast|\xi|$, which means, \begin{equation*}
			\eta_1^\ast=\eta_1^\ast(\tau,\xi)=-\frac{\tau-|\xi|^2}{2|\xi|}.
		\end{equation*}Then, by changing variables, applying Hölder's inequality and Plancherel's theorem, we deduce\begin{align*}
			I=&\int_{\mathbb{R}^d}\int_{\mathbb{R}}\left(\int_{\mathbb{R}^{d-1}}\frac{|\xi|^{2\omega_1}}{2|(\eta_1^\ast,\eta')|^{2\omega}|(\eta_1^\ast-|\xi|,\eta')|^{2\omega}}d\eta'\right)|\widetilde{V}(|\xi|^2-2\eta_1^\ast|\xi|,\xi)|^2d\tau d\xi\\\leq&\|V\|_{L_t^2L_x^2}^2\sup_{\tau,\xi}\int_{\mathbb{R}^{d-1}}\frac{|\xi|^{2\omega_1}}{2|(\eta_1^\ast,\eta')|^{2\omega}|(\eta_1^\ast-|\xi|,\eta')|^{2\omega}}d\eta'.
		\end{align*}To show (\ref{CHP3}), it suffices to show that \begin{equation}\label{CHP4}
			\sup_{\tau,\xi}\int_{\mathbb{R}^{d-1}}\frac{|\xi|^{2\omega_1}}{2|(\eta_1^\ast,\eta')|^{2\omega}|(\eta_1^\ast-|\xi|,\eta')|^{2\omega}}d\eta'<\infty
		\end{equation}holds. First, we restrict $2|\eta_1^\ast|\leq|\xi|$ on the region $|\eta'|\leq2|\xi|$. In this case, $2|\eta_1^\ast-|\xi\|\geq|\xi|$, so that for every $\omega<(d-1)/2$, we have\begin{align*}
			\int_{|\eta'|\leq2|\xi|}\frac{|\xi|^{2\omega_1}}{2|(\eta_1^\ast,\eta')|^{2\omega}|(\eta_1^\ast-|\xi|,\eta')|^{2\omega}}d\eta'&\leq\int_{|\eta'|\leq2|\xi|}\frac{|\xi|^{2\omega_1}}{2|\eta'|^{2\omega}|\eta_1^\ast-|\xi||^{2\omega}}d\eta'\\&\lesssim\int_{|\eta'|\leq2|\xi|}\frac{|\xi|^{2\omega_1-2\omega}}{|\eta'|^{2\omega}}d\eta'\\&\lesssim|\xi|^{2\omega_1-4\omega+d-1}\\&=1.
		\end{align*}On the other hand, if we restrict $2|\eta_1^\ast|\geq|\xi|$ on the same region $|\eta'|\leq2|\xi|$, then for every $\omega<(d-1)/2$, still we have \begin{align*}
			\int_{|\eta'|\leq2|\xi|}\frac{|\xi|^{2\omega_1}}{2|(\eta_1^\ast,\eta')|^{2\omega}|(\eta_1^\ast-|\xi|,\eta')|^{2\omega}}d\eta'&\leq\int_{|\eta'|\leq2|\xi|}\frac{|\xi|^{2\omega_1}}{2|\eta_1^\ast|^{2\omega}|\eta'|^{2\omega}}d\eta'\\&\lesssim\int_{|\eta'|\leq2|\xi|}\frac{|\xi|^{2\omega_1-2\omega}}{|\eta'|^{2\omega}}d\eta'\\&\lesssim|\xi|^{2\omega_1-4\omega+d-1}\\&=1.
		\end{align*}
		On the remaining region $|\eta'|\geq2|\xi|$, if $\omega>(d-1)/4$, then we obtain \begin{equation*}
			\int_{|\eta'|\geq2|\xi|}\frac{|\xi|^{2\omega_1}}{2|(\eta_1^\ast,\eta')|^{2\omega}|(\eta_1^\ast-|\xi|,\eta')|^{2\omega}}d\eta'\leq\int_{|\eta'|\geq2|\xi|}\frac{|\xi|^{2\omega_1}}{|\eta'|^{4\omega}}d\eta'\leq|\xi|^{2\omega_1-4\omega+d-1}=1.
		\end{equation*}In conclusion, the inequality (\ref{CHP4}) holds if\begin{equation*}
			\frac{d-1}{4}<\omega<\frac{d-1}{2};\quad 2\omega_1-4\omega+d-1=0,
		\end{equation*}which finishes this proof.
	\end{proof}
	
	\begin{rem}\label{R6}
		The above proof is highly effective due to the identity (\ref{fact}), which arises since we are considering $e^{it\Delta}$. For general fractional Schrödinger propagators, it seems difficult to establish Theorem \ref{main2} by using such an approach.
	\end{rem}
	
	\section{A proof of Theorem \ref{main1} using bilinear interpolation.}
	In this section, we establish an original approach for $\mathcal{O}_2(4,r,2)$ on the segment $(M,N)$. We start the argument from a frequency localised setting. Denote $U_s=e^{it\Delta}|\nabla|^{-s}$, where $s=d/2-d/r-1/2$. Fix a non-negative $\phi_1\in C_c^\infty(\mathbb{R}^d)$ with $\phi_1(0)=1$, and define $\phi_R(\xi)=\phi_1(R^{-1}\xi)$. Define a Fourier multiplier $P_R$ such that $\widehat{P_Rf}(\xi)=\phi_R(\xi)\widehat{f}(\xi)$, and denote $U_{s,R}=e^{it\Delta}|\nabla|^{-s}P_R$. Consider the following frequency localised version inequality of $\mathcal{O}_2(4,r,2)$\begin{equation}\label{fl1}
		\left\Vert\sum_j\lambda_j\left|U_{s,R}f_j\right|^2\right\Vert_{L_t^2L_x^\frac{r}{2}}\lesssim\Vert\lambda\Vert_{\ell^2},
	\end{equation}where $(f_j)\subset L^2$ is an orthonormal system, and $\lambda=(\lambda_j)\subset\mathbb{C}$ is a sequence. According to \cite[Lemma 3]{Frank3}, (\ref{fl1}) is equivalent with\begin{equation}\label{fl2}
		\left\Vert WU_{s,R}U_{s,R}^\ast\overline{W}\right\Vert_{\mathfrak{S}^2}\lesssim\Vert W\Vert_{L_t^4 L_x^{\widetilde{r}}}^2,
	\end{equation}where $U_{s,R}^\ast$ is the adjoint of $U_{s,R}$, and the implicit constant is independent of $R$. Note that we may deduce $O_2(4,r,2)$ by applying Fatou's lemma to (\ref{fl2}). Thus, to show Theorem \ref{main1}, it suffices to proof (\ref{fl2}). 
	
	\vspace{2ex}
	
	Notice that we have \begin{equation*}
		\left\Vert WU_{s,R}U_{s,R}^\ast\overline{W}\right\Vert_{\mathfrak{S}^2}=R^{-(2s+2)}\left\Vert W_RU_{s,1}U_{s,1}^\ast\overline{W_R}\right\Vert_{\mathfrak{S}^2},
	\end{equation*}where $W_R(t,x)=W(R^{-2}t,R^{-1}x)$. Thus, by proving the following lemma and using normal rescaling argument, we obtain (\ref{fl2}).  \begin{lem}\label{31}
		Let $d\geq2$. Suppose $2<\widetilde{r}<4d/(d+1)$. Then, the inequality\begin{equation}\label{3A}
			\left\Vert W_1U_{s,1}U_{s,1}^\ast W_2\right\Vert_{\mathfrak{S}^2}\lesssim\Vert W_1\Vert_{L_t^4 L_x^{\widetilde{r}}}\Vert W_2\Vert_{L_t^4 L_x^{\widetilde{r}}}
		\end{equation}holds for every $W_1,W_2 \in L_t^4 L_x^{\widetilde{r}}$.
	\end{lem}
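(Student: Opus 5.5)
The plan is to prove (\ref{3A}) by rewriting the Hilbert--Schmidt norm as a bilinear form with a positive kernel, splitting that kernel dyadically in the time separation, and summing the dyadic pieces by a Keel--Tao type bilinear interpolation. The point is that $\mathfrak{S}^2$ makes the left-hand side explicit. Let
\[
K(t,x)=\int_{\mathbb{R}^d}e^{i(x\cdot\xi-t|\xi|^2)}|\xi|^{-2s}\phi_1(\xi)^2\,d\xi .
\]
Then $U_{s,1}U_{s,1}^\ast$ is the space-time convolution operator with kernel $K(t-t',x-x')$, up to a constant, and
\[
\left\Vert W_1U_{s,1}U_{s,1}^\ast W_2\right\Vert_{\mathfrak{S}^2}^2\simeq\int\!\!\int F(t,x)\,|K(t-t',x-x')|^2\,G(t',x')\,dt\,dx\,dt'\,dx',
\]
with $F=|W_1|^2$ and $G=|W_2|^2$. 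Setting $p=\widetilde r/2\in(1,2d/(d+1))$, it therefore suffices to show that
\[
B(F,G)=\langle F,\,|K|^2\ast G\rangle
\]
is bounded on $L^2_tL^p_x\times L^2_tL^p_x$. The kernel is positive, so no cancellation is needed from here on.

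\textbf{Kernel bounds.} I would choose a smooth dyadic partition $1=\sum_{j\geq0}\chi_j(t)$ with $\chi_0$ supported in $|t|\lesssim1$ and $\chi_j$ supported in $|t|\sim2^j$ for $j\geq1$, and let $B_j$ be the form with kernel $\chi_j(t-t')|K|^2$. The key input is a family of estimates of the form
\[
\sup_{|t|\sim2^j}\big\Vert |K(t,\cdot)|^2\big\Vert_{L^a_x}\lesssim 2^{-j\beta(a)} .
\]
For $a=\infty$ I expect the dispersive bound $|K(t,x)|\lesssim\langle t\rangle^{-(d-2s)/2}$. Near $\xi=0$ the weight $|\xi|^{-2s}$ behaves like a homogeneous symbol of degree $-2s>-d$, which gives the rate $|t|^{-(d-2s)/2}$; away from the origin there is the usual $|t|^{-d/2}$ decay from stationary phase. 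For finite $a$, I would use non-stationary phase for $|x|\gg|t|$ and the localisation of $K(t,\cdot)$ to the region $|x|\lesssim|t|$; this gives $\beta(a)=(d-2s)-d/a$.

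With these bounds, Young's inequality in $x$ and then Schur's test in $t$ on $|t-t'|\sim2^j$ give, with $1/p_1+1/p_2+1/a=2$,
\[
|B_j(F,G)|\lesssim2^{j}\,2^{-j\beta(a)}\Vert F\Vert_{L^2_tL^{p_1}_x}\Vert G\Vert_{L^2_tL^{p_2}_x}.
\]
Using $d-2s=2d/r+1$ and $1/p=2-4/\widetilde r$, the exponent $1-\beta(a)$ vanishes exactly at $p_1=p_2=p$, consistent with scaling. It has opposite signs for $(p_1,p_2)$ on either side of this point. The hypothesis $2<\widetilde r<4d/(d+1)$ is what keeps the required $a$ inside $[1,\infty]$. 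It also guarantees enough decay at $|t|\lesssim1$ (the $j=0$ piece is then trivial) and integrability of $|\xi|^{-2s}$ where this is needed.

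\textbf{Summation.} The sum $\sum_jB_j$ sits exactly at the endpoint of Hardy--Littlewood--Sobolev in time, so it cannot be summed term by term. Following Keel--Tao, I would perturb $(1/p_1,1/p_2)$ around $(1/p,1/p)$ in two independent directions, producing bounds $2^{j\epsilon_0}$ and $2^{-j\epsilon_1}$ with $\epsilon_0,\epsilon_1$ of opposite signs. The bilinear real interpolation lemma (Bergh--Löfström, Exercise 3.13.5(b), as used by Keel--Tao) then bounds $\sum_jB_j$ on
\[
(L^2_tL^{p_{1,0}}_x,L^2_tL^{p_{1,1}}_x)_{\theta,2}\times(L^2_tL^{p_{2,0}}_x,L^2_tL^{p_{2,1}}_x)_{\theta,2}\supset L^2_tL^{p,2}_x\times L^2_tL^{p,2}_x .
\]
Since $p<2$, we have $L^p\subset L^{p,2}$, and this yields (\ref{3A}).

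\textbf{Main obstacle.} The two steps I expect to be hardest are these.

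1. Interpolating spaces of the form $L^2_t(\text{Banach})$. I would use the vector-valued identity $(L^2(A_0),L^2(A_1))_{\theta,2}=L^2((A_0,A_1)_{\theta,2})$, which is valid because the outer exponent equals the interpolation index. This is exactly what avoids the faulty identity (\ref{error}).

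2. Verifying the $L^a_x$ kernel bounds uniformly in the presence of the low-frequency singularity $|\xi|^{-2s}$. If the direct bound is delicate, I would first try splitting the frequency into dyadic annuli $|\xi|\sim2^{-k}$, rescaling each annulus to unit frequency, and summing in $k$ using the slack from $\widetilde r>2$.
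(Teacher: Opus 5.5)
Your architecture matches the paper's Section 3.2: the positive kernel $|K|^2$, dyadic pieces in $|t-t'|$, Young in $x$ plus Cauchy--Schwarz in $t$, then Keel--Tao bilinear interpolation into $L^2_tL^{p,2}_x\supset L^2_tL^p_x$. The gap is in the one analytic input, the fixed-time kernel bound. As you state it, it is off-scaling, and with it the argument does not close.

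At the critical exponent $1/a=2-2/p=4/r$ (this is your $2-4/\widetilde r$; note $1/p=2/\widetilde r$), your $\beta(a)=(d-2s)-d/a$ gives $1-\beta(a)=2d/r>0$. So your bounds for $B_j$ grow like $2^{2jd/r}$ at the target point, and no interpolation can produce an $\ell^1$ bound there.

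The heuristic behind $\beta$ is wrong in two ways:
\begin{itemize}
\item The height $\langle t\rangle^{-(d-2s)/2}$ is attained only on $|x|\lesssim|t|^{1/2}$ (frequencies $|\xi|\sim|t|^{-1/2}$). Spreading it over $\{|x|\lesssim|t|\}$ loses a factor $|t|^{d/(2a)}$.
\item $K(t,\cdot)$ is not localised to $|x|\lesssim|t|$. The singularity of $|\xi|^{-2s}$ at the origin leaves a tail $|x|^{-(d-2s)}$, as in (\ref{7}).
\end{itemize}
The scaling-consistent bound is $\Vert|K(t,\cdot)|^2\Vert_{L^a_x}\lesssim\langle t\rangle^{-(d-2s)+d/(2a)}$. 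Proving it needs the finer profile $|K(t,x)|\lesssim|t|^{2s-d/2}|x|^{-2s}+|x|^{-(d-2s)}$ for $|x|\gtrsim|t|^{1/2}$, where the first term comes from the stationary point $\xi_0=x/2t$. This is what the paper extracts from Proposition \ref{dispersive estimates} and Lemma \ref{32}: the $\mathbf{A}_1$ majorant $|x-y|^{-4s}|t-t'|^{-(d-4s)}$ is exactly the square of that term.

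Integrating this profile gives the scaling rate only when $4sa>d$ and $2(d-2s)a>d$. At $a=r/4$ the second condition is automatic. The first is exactly $r>4d/(d-1)$, i.e.\ $\widetilde r<4d/(d+1)$. If $4sa<d$, the unit-frequency dispersive part (height $|t|^{-d/2}$ on $|x|\sim|t|$) dominates and gives $|t|^{-d+d/a}$, which is strictly worse than scaling. So the upper restriction on $\widetilde r$ is not what keeps $a$ in $[1,\infty]$; that only needs $r>4$. It is what makes the critical kernel estimate true on an open neighbourhood of $1/a=4/r$, which the interpolation needs.

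Your fallback repairs this if you carry it out quantitatively. Write $K=\sum_kK_k$ with $|\xi|\sim2^{-k}$ on the support of the symbol of $K_k$. Rescaling and interpolating the unit-frequency $L^2$ and $L^\infty$ bounds gives $\Vert K_k(t,\cdot)\Vert_{L^{2a}}\lesssim2^{-k(d-2s-\frac{d}{2a})}\langle2^{-2k}t\rangle^{-\frac{d}{2}(1-\frac{1}{a})}$. The sum over $k$ is then $\lesssim\langle t\rangle^{-\frac{d-2s}{2}+\frac{d}{4a}}$ precisely under the two conditions above.

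With that in place, the rest of your proof goes through as in the paper. This fixed-time $L^a_x$ route is a somewhat cleaner substitute for the paper's region-by-region pointwise majorants with the $\nu_0/\nu_1$ split. Your use of the frequency cutoff to make the $|t-t'|\lesssim1$ piece trivial replaces the paper's sum over all $k\in\mathbb{Z}$.
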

	In Subsection 3.1, we derive some estimates for the integral kernel of $U_{s,1}U_{s,1}^\ast$, by using the oscillatory integral theory. In Subsection 3.2, we show how these estimates establish Lemma \ref{31}, by applying the interpolation method in Keel--Tao \cite{Keel-Tao} to the bilinear structure $W_1U_{s,1}U_{s,1}^\ast W_2$.
	
	\subsection{Oscillatory integral estimates.}\label{s1}
	Firstly, we derive the form of $\ker_{U_{s,1}U_{s,1}^\ast}$. Since \begin{equation*}
		\left<\widehat{f},\widehat{U_{s,1}^\ast F} \right>_{L^2}\simeq\left<f,U_{s,1}^\ast F\right>_{L^2}\simeq\left<U_{s,1}f,F\right>_{L^2},
	\end{equation*}we deduce\begin{equation*}
		\widehat{U_{s,1}^\ast F}(\xi)\simeq\int_{\mathbb{R}^{d+1}}e^{-i(x\cdot\xi-t|\xi|^2)}\phi_1(\xi)|\xi|^{-s}F(t,x)dxdt=\phi_1(\xi)|\xi|^{-s}\widehat{F}(-|\xi|^2,\xi).
	\end{equation*}Thus, \begin{equation*}
		U_{s,1}U_{s,1}^\ast F(t,x)\simeq\int_{\mathbb{R}^{d+1}}F(t',y)\int_{\mathbb{R}^d}e^{i\left((x-y)\cdot\xi-(t-t')|\xi|^2\right)}\phi_1^2(\xi)|\xi|^{-2s}d\xi dydt',
	\end{equation*}we obtain\begin{equation*}
		\ker_{U_{s,1}U_{s,1}^\ast}(t,t',x,y)=\int_{\mathbb{R}^d}e^{i\left((x-y)\cdot\xi-(t-t')|\xi|^2\right)}\phi_1^2(\xi)|\xi|^{-2s}d\xi,
	\end{equation*}and thus, \begin{align*}
		&\left\Vert W_1U_{s,1}U_{s,1}^\ast W_2\right\Vert_{\mathfrak{S}^2}^2\\=&\left\Vert \ker_{W_1U_{s,1}U_{s,1}^\ast W_2}\right\Vert_{L^2(\mathbb{R}^{2d+2})}^2\\=&\int_{\mathbb{R}^{2d+2}}|W_1(t,x)|^2|W_2(t',y)|^2\left|\int_{\mathbb{R}^d}e^{i\left((x-y)\cdot\xi-(t-t')|\xi|^2\right)}\phi_1^2(\xi)|\xi|^{-2s}d\xi\right|^2 dxdydtdt'.
	\end{align*}To compensate for the singularity caused by $\phi_1^2(\xi)|\xi|^{-2s}$ at the origin, we apply the following dyadic decomposition. Let $\psi\in C_c^\infty$ satisfy \begin{equation*}
		\text{supp}\psi=\left\{\xi;\frac{1}{4}\leq|\xi|\leq\frac{1}{2}\right\};\qquad\phi_1^2(\xi)=\sum_{j=1}^{\infty}\psi(2^j\xi).
	\end{equation*}Define the following bilinear transformation \begin{equation*}
		T(F,G)=\int_{\mathbb{R}^{2d+2}}F(t,x)G(t',y)\sum_{j_0,j_1} \left|I_{j_0}(t-t',x-y)I_{j_1}(t-t',x-y)\right|dxdydtdt',
	\end{equation*}where \begin{equation*}
		I_j(t,x)=2^{-j(d-2s)}\int_{\mathbb{R}^d}e^{i\left(2^{-j}x\cdot\xi-2^{-2j}t|\xi|^2\right)}\psi(\xi)|\xi|^{-2s}d\xi.
	\end{equation*}By applying the triangle inequality, we see that to prove Lemma \ref{31}, it suffices to show that \begin{equation}\label{3B}
		|T(F,G)|\lesssim\| F\|_{L_t^2L_x^p}\| G\|_{L_t^2L_x^p}
	\end{equation}holds for all $1<p<2d/(d+1)$, and for all functions $F,G\in L_t^2 L_x^p$. 
	
	\vspace{2ex}
	
	Regarding the phase function $\Phi(\xi)=\Phi_{t,x}(\xi)=2^{-j}x\cdot\xi-2^{-2j}t|\xi|^2$ of $I_j$, the triangle inequality and our assumption $1/4\leq|\xi|\leq1/2$ imply $|\nabla\Phi(\xi)|\geq2^{-j}|x|-2^{-2j}|t|$, and $|\nabla\Phi(\xi)|\geq2^{-1}\cdot2^{-2j}|t|-2^{-j}|x|$. Thus,\begin{equation}\label{3C}
		|\nabla\Phi(\xi)|\gtrsim\begin{cases}
			2^{-j}|x|,& 2|t|\leq2^j|x|;\\2^{-2j}|t|,& 2^j|x|\leq2^{-2}|t|.
		\end{cases}
	\end{equation}In order to apply the oscillatory integral theory, we introduce the following notation for collections of $j$ with fixed $x,y,t,t'$. \begin{itemize}
		\item $\mathbf{A}_1=\{j;1/4|t-t'|\leq2^j|x-y|\leq2|t-t'|\}$;
		\item $\mathbf{A}_2=\{j;2|t-t'|\leq2^j|x-y|\}$;
		\item $\mathbf{A}_3=\{j;2^j|x-y|\leq1/4|t-t'|\}$.
	\end{itemize}We estimate $I_j$ by the following statement.
	
	\begin{lem}\label{32}
		Let $d\geq2$. Suppose $(d+1)/2d<1/p<1$. Then, \begin{equation}\label{3D}
			\sum_{j_0\in\mathbf{A}_m}\sum_{j_1\in\mathbf{A}_n} \left|I_{j_0}(t-t',x-y)I_{j_1}(t-t',x-y)\right|\lesssim|x-y|^{-2\delta}|t-t'|^{-(d-2s-\delta)}
		\end{equation}holds for all of the following cases\begin{itemize}
			\item[(B1).] $m=n=1$, and $\delta\in\{2s-d,2s\}$;
			\item[(B2).]  $m,n\in\{2,3\}$, and $\delta\in\{0,d-2s\}$;
			\item[(B3).] $(m,n)\in\{(1,2),(1,3),(2,1),(3,1)\}$, and $\delta\in\{0,d/2\}$.
		\end{itemize}
	\end{lem}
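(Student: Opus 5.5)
The plan is to first prove single-scale bounds for $I_j$ and then sum the dyadic pieces in each of the collections $\mathbf{A}_1,\mathbf{A}_2,\mathbf{A}_3$. Throughout, I write $(t,x)$ for $(t-t',x-y)$. I would also first check that the hypotheses force $0<s<d/2$, so that $0<d-2s$ and $2s>0$; both strict inequalities are used in the summations below.

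\textbf{Single-scale bounds.} Since $\psi(\xi)|\xi|^{-2s}$ is a fixed smooth bump supported in $\{1/4\le|\xi|\le1/2\}$, the phase $\Phi$ has non-degenerate Hessian $-2^{1-2j}t\,\mathrm{Id}$. The standard dispersive/stationary phase estimate then gives
\begin{equation*}
|I_j(t,x)|\lesssim 2^{-j(d-2s)}\min\{1,(2^{-2j}|t|)^{-d/2}\}\qquad\text{for all } j.
\end{equation*}
When $j\in\mathbf{A}_2$, (\ref{3C}) gives $|\nabla\Phi|\gtrsim 2^{-j}|x|$. Repeated integration by parts (non-stationary phase, with all higher derivatives of $\Phi$ bounded by $O(2^{-j}|x|+2^{-2j}|t|)\lesssim 2^{-j}|x|$) then yields
\begin{equation*}
|I_j|\lesssim 2^{-j(d-2s)}(1+2^{-j}|x|)^{-N}.
\end{equation*}
Similarly, for $j\in\mathbf{A}_3$ we get $|I_j|\lesssim 2^{-j(d-2s)}(1+2^{-2j}|t|)^{-N}$. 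The set $\mathbf{A}_1$ contains only $O(1)$ indices, each with $2^j\sim|t|/|x|$.

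\textbf{Case (B1).} Here the sum has $O(1)$ terms and $2^{-2j}|t|\sim|x|^2/|t|$, so
\begin{equation*}
\sum\lesssim (|x|/|t|)^{2(d-2s)}\min\{1,(|t|/|x|^2)^{d}\}.
\end{equation*}
Using the bound $1$ gives $|x|^{2d-4s}|t|^{-(2d-4s)}$, which is exactly the case $\delta=2s-d$. Using the second bound gives $|x|^{-4s}|t|^{-(d-4s)}$, which is the case $\delta=2s$.

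\textbf{Case (B2).} The double sum factors as a product of two single sums, so it suffices to show that
\begin{equation*}
\sum_{j\in\mathbf{A}_m}|I_j|\lesssim\min\{|t|^{-(d-2s)/2},\,|x|^{-(d-2s)}\}\qquad\text{for } m=2,3.
\end{equation*}
\emph{The $|t|$ bound.} Sum the dispersive bound over all $j\in\mathbb{Z}$. For $2^{2j}\le|t|$ the terms are $2^{2js}|t|^{-d/2}$, which form a convergent geometric series because $s>0$. For $2^{2j}\ge|t|$ the terms are $2^{-j(d-2s)}$, which converge because $d-2s>0$. Both pieces are $\lesssim|t|^{s-d/2}$.

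\emph{The $|x|$ bound on $\mathbf{A}_2$.} Sum $2^{-j(d-2s)}(1+2^{-j}|x|)^{-N}$ over all $j$; it is dominated by the terms with $2^j\sim|x|$ and gives $|x|^{-(d-2s)}$.

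\emph{The $|x|$ bound on $\mathbf{A}_3$.} Here $2^j\le|t|/(4|x|)$, so $2^{-2j}|t|\ge 2^{-j}\cdot 4|x|$. Hence $(1+2^{-2j}|t|)^{-N}\le(1+2^{-j}|x|)^{-N}$, and the same sum as for $\mathbf{A}_2$ applies.

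Taking the product of the two factors gives $\delta=0$ from the $|t|$ bounds and $\delta=d-2s$ from the $|x|$ bounds.

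\textbf{Case (B3).} Multiply the $\mathbf{A}_1$ factor $F_1\lesssim(|x|/|t|)^{d-2s}\min\{1,(|t|/|x|^2)^{d/2}\}$ by the (B2) factor $F_2\lesssim\min\{|t|^{-(d-2s)/2},|x|^{-(d-2s)}\}$. The two cases are:
\begin{itemize}
\item $\delta=0$: pair the bounds so that the product is $|t|^{-(d-2s)}$, i.e.\ combine $F_2\le|x|^{-(d-2s)}$ with $F_1\le(|x|/|t|)^{d-2s}$.
\item $\delta=d/2$: target $|x|^{-d}|t|^{-(d/2-2s)}$. Combine $F_1\le|x|^{-2s}|t|^{-(d/2-2s)}\cdot|x|^{0}$ (obtained from the second branch of the min) with a suitable interpolation of the two bounds for $F_2$, or, if needed, re-sum $F_2$ directly, since on $\mathbf{A}_1$ we have $|x|^2/|t|\sim 2^{-2j}|t|$.
\end{itemize}

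\textbf{Main obstacle.} I expect the $\delta=d/2$ bookkeeping in (B3) to be the most delicate step. The difficulty is that there the $|x|$ and $|t|$ powers must be matched exactly using only the coupling $2^j\sim|t|/|x|$, and I would handle it by splitting according to whether $|t|\lessgtr|x|^2$. The condition on $p$ (equivalently on $r$) enters only through ensuring $0<s<d/2$, which is exactly what makes the dyadic sums converge at both ends.
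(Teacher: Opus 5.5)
Your proposal is correct and follows the paper's route: single-scale bounds (your global dispersive bound coincides with (ii) on $\mathbf{A}_1$, since $2^{-2j}|t|\sim 2^{-j}|x|$ there), then the summed bounds (\ref{7})--(\ref{8}) on $\mathbf{A}_2\cup\mathbf{A}_3$, then products. The $\delta=d/2$ case of (B3) that you flag as delicate is immediate and is exactly the paper's computation: the second-branch bound $F_1\lesssim|x|^{-2s}|t|^{-(d/2-2s)}$ times $F_2\lesssim|x|^{-(d-2s)}$ already equals $|x|^{-d}|t|^{-(d/2-2s)}$, so no interpolation and no splitting according to $|t|\lessgtr|x|^2$ is needed.
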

	
	\vspace{2ex}
	
	To prove Lemma \ref{32}, the following proposition is useful.
	\begin{prop}\label{dispersive estimates}The following inequalities are true.
		\begin{itemize}
			\item[(i)] For every $t,x$,\begin{equation*}
				|I_j(t,x)|\lesssim2^{-j(d-2s)};
			\end{equation*}
			\item[(ii)] For $1/4|t|\leq2^j|x|\leq2|t|$,\begin{equation*}
				|I_j(t,x)|\lesssim2^{-j(d-2s)}\frac{1}{(2^{-j}|x|)^{d/2}};
			\end{equation*}
			\item[(iii)] For $2|t|\leq2^j|x|$,\begin{equation*}
				|I_j(t,x)|\lesssim_N2^{-j(d-2s)}\frac{1}{(2^{-j}|x|)^{N}}\lesssim_N2^{-j(d-2s)}\frac{1}{(2^{-2j}|t|)^{N}}
			\end{equation*}holds for every $N>0$;
			\item[(iv)] For $2^j|x|\leq1/4|t|$,\begin{equation*}
				|I_j(t,x)|\lesssim_N2^{-j(d-2s)}\frac{1}{(2^{-2j}|t|)^{N}}\lesssim_N2^{-j(d-2s)}\frac{1}{(2^{-j}|x|)^{N}}
			\end{equation*}holds for every $N>0$.
		\end{itemize}
	\end{prop}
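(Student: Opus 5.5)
The plan is to remove the dyadic scaling first and then treat each of (i)--(iv) as a standard oscillatory integral estimate. Substituting directly in the definition gives
\begin{equation*}
I_j(t,x)=2^{-j(d-2s)}J(2^{-2j}t,2^{-j}x),\qquad J(T,X)=\int_{\mathbb{R}^d}e^{i(X\cdot\xi-T|\xi|^2)}a(\xi)\,d\xi,\qquad a(\xi)=\psi(\xi)|\xi|^{-2s}.
\end{equation*}
Since $\psi$ is supported in the annulus $\{1/4\leq|\xi|\leq1/2\}$, away from the origin, the amplitude $a$ lies in $C_c^\infty$ with bounds depending only on $s$. It therefore suffices to show $|J(T,X)|\lesssim1$ in general, $|J|\lesssim|X|^{-d/2}$ when $|T|/4\leq|X|\leq2|T|$, and $|J|\lesssim_N|X|^{-N}\sim|T|^{-N}$ in the two non-stationary regimes. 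Here $(T,X)=(2^{-2j}t,2^{-j}x)$. The conditions $1/4|t|\leq2^j|x|\leq2|t|$ and so on become the corresponding conditions on $|X|$ versus $|T|$ after multiplying by $2^{-2j}$.

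\textbf{Case (i).} This is immediate: $|J|\leq\|a\|_{L^1}$.

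\textbf{Cases (iii) and (iv).} These are non-stationary phase. By (\ref{3C}), on the support of $a$ we have $|\nabla_\xi\Phi|\gtrsim|X|$ when $2|T|\leq|X|$, and $|\nabla_\xi\Phi|\gtrsim|T|$ when $|X|\leq|T|/4$. Higher derivatives of $\Phi=X\cdot\xi-T|\xi|^2$ are bounded by $|T|$, and $|T|\lesssim|\nabla\Phi|$ in both regimes. I would integrate by parts $N$ times with the operator $L=\frac{\nabla\Phi\cdot\nabla}{i|\nabla\Phi|^2}$. Each application of ${}^tL$ gains a factor $|\nabla\Phi|^{-1}$, giving $|J|\lesssim_N|\nabla\Phi|^{-N}$. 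In regime (iii), $|X|\geq2|T|$, so $|X|^{-N}\lesssim|T|^{-N}$, which yields the second inequality claimed there. In regime (iv), $|T|\geq4|X|$, so $|T|^{-N}\lesssim|X|^{-N}$. These bounds are only useful when $|X|,|T|\gtrsim1$; otherwise (i) already gives everything needed. One small issue is that $|\nabla\Phi|$ could be small when $|X|$ and $|T|$ are themselves small. To handle this, interpret the bounds as $\min(1,\cdot)$, or check that they are only invoked where they hold. In that small range the stated estimate with $N>0$ follows from (i) anyway, provided the right-hand side is $\gtrsim1$.

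\textbf{Case (ii), the main step.} Here a stationary point $\xi_0=X/(2T)$ may lie in the support. I would not do stationary phase by hand. Instead, I would use the dispersive estimate for the free Schr\"odinger kernel: $J(T,\cdot)=e^{-iT\Delta}$-type evolution applied to $\check a$, so
\begin{equation*}
J(T,X)=c\,(iT)^{-d/2}\int e^{i|X-Y|^2/(4T)}\check a(Y)\,dY,
\end{equation*}
which gives $|J|\lesssim|T|^{-d/2}\|\check a\|_{L^1}\lesssim|T|^{-d/2}$. Alternatively, one can use the standard stationary phase bound with nondegenerate Hessian $-2T\,\mathrm{Id}$. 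Since $|T|\sim|X|$ in this regime, this gives $|J|\lesssim|X|^{-d/2}$. Combined with (i) for $|X|\lesssim1$, this establishes (ii).

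I expect the only delicate point to be bookkeeping: making sure the non-stationary bounds are used only where $|X|\gtrsim1$, or are harmlessly combined with (i). No genuinely hard step appears.
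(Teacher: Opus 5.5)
Your proposal is correct and follows essentially the same route as the paper. Part (i) is the trivial bound, parts (iii)--(iv) are non-stationary phase based on the gradient lower bound (\ref{3C}), and part (ii) comes from the $|T|^{-d/2}$ decay produced by the nondegenerate Hessian. The paper gets (ii) from the stationary phase proposition in Stein--Shakarchi, while your explicit Schr\"odinger-kernel argument gives the same bound for all $X$.

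Your normalisation $I_j(t,x)=2^{-j(d-2s)}J(2^{-2j}t,2^{-j}x)$ also makes the uniformity of the constants in $j$ explicit, which the paper's remark that $\det\nabla^2(\Phi/|t|)\sim_d 2^{-2jd}$ leaves implicit. Your concern about small $|X|,|T|$ is harmless: there the right-hand sides of (iii) and (iv) are $\gtrsim 1$, so (i) already covers them.
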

	
	Since $\psi$ has compact support and vanishes near the origin, trivially we obtain (i). Applying (\ref{3C}) and Proposition 2.1 in \cite[Chapter 8]{M.Stein 1} yields (iii) and (iv). Regarding the singularity region $1/4|t|\leq2^j|x|\leq2|t|$, one has $\det\{\nabla^2(\Phi/|t|)\}\sim_d2^{-2jd}$, thus, we obtain (ii) by applying Proposition 2.5 in \cite[Chapter 8]{M.Stein 1}. 
	
	\begin{proof}[Proof of Lemma \ref{32}]
		Fix $x,y,t,t'$. First, we consider the case $m=n=1$. Note that the condition\begin{equation*}
			\frac{1}{4}|t-t'|\leq2^j|x-y|\leq2|t-t'|
		\end{equation*}implies $\# \mathbf{A}_1\sim 1$. Also, for $j\in\mathbf{A}_1$, \begin{equation*}
			2^j\sim \frac{|t-t'|}{|x-y|}.
		\end{equation*}Thus, using (i) yields \begin{equation*}
			\sum_{j_0\in\mathbf{A}_1}\sum_{j_1\in\mathbf{A}_1} \left|I_{j_0}(t-t',x-y)I_{j_1}(t-t',x-y)\right|\lesssim\left(\frac{|x-y|}{|t-t'|}\right)^{2d-4s},\end{equation*}and on the other hand, using (ii) yields\begin{align*}
			\sum_{j_0\in\mathbf{A}_1}\sum_{j_1\in\mathbf{A}_1} \left|I_{j_0}(t-t',x-y)I_{j_1}(t-t',x-y)\right|\lesssim&\sum_{j_0\in\mathbf{A}_1}\sum_{j_1\in\mathbf{A}_1}2^{-(j_0+j_1)(\frac{d}{2}-2s)}|x-y|^{-d}\\\sim&\left(\frac{|x-y|}{|t-t'|}\right)^{d-4s}|x-y|^{-d}\\=&|x-y|^{-4s}|t-t'|^{-(d-4s)}.
		\end{align*}This completes the proof for case $m=n=1$.
		
		\vspace{2ex}
		
		Consider the other cases. Here we remark that the following are given by applying (i), (iii) and (iv): \begin{align}
			\sum_{j\in\{2|t|\leq2^j|x|\}\bigcup\{2^j|x|\leq1/4|t|\}}|I_j(t,x)|&\lesssim\frac{1}{|x|^{d-2s}};\label{7}\\
			\sum_{j\in\{2|t|\leq2^j|x|\}\bigcup\{2^j|x|\leq1/4|t|\}}|I_j(t,x)|&\lesssim\frac{1}{|t|^{\frac{d-2s}{2}}}.\label{8}
		\end{align}
		In fact, (\ref{7}) is true, since \begin{align*}
			\sum_{\footnotesize\begin{matrix}
					j\in\{2|t|\leq2^j|x|\}\bigcup\{2^j|x|\leq1/4|t|\}\\|x|\geq2^j
			\end{matrix}}|I_j(t,x)|&\lesssim\frac{1}{|x|^{N}}\sum_{|x|\geq2^j}2^{j(N-d+2s)}\lesssim\frac{1}{|x|^{d-2s}};\\\sum_{\footnotesize\begin{matrix}
					j\in\{2|t|\leq2^j|x|\}\bigcup\{2^j|x|\leq1/4|t|\}\\|x|\leq2^j
			\end{matrix}}|I_j(t,x)|&\lesssim\sum_{|x|\leq2^j}2^{-j(d-2s)}\sim\frac{1}{|x|^{d-2s}}
		\end{align*}are true, if we take $N>d-2s$, and apply (iii) and (iv). Also, (\ref{8}) can be given similarly.
		
		\vspace{2ex}
		
		By using inequalities (\ref{7}) and (\ref{8}), we may directly complete the proof of the remaining cases. For instance, by applying (\ref{7}) and (ii), we deduce \begin{align*}
			\sum_{j_0\in\mathbf{A}_1}\sum_{j_1\in\mathbf{A}_2\cup\mathbf{A}_3} \left|I_{j_0}(t-t',x-y)I_{j_1}(t-t',x-y)\right|&\lesssim2^{-j_0(\frac{d}{2}-2s)}|x-y|^{-\frac{d}{2}}|x-y|^{-(d-2s)}\\&\sim|x-y|^{-d}|t-t'|^{-(\frac{d}{2}-2s)}.
		\end{align*}Computations for other cases are similar and simple, so we omit them here.
	\end{proof}
	
	\vspace{2ex}
	
	Let us go back to consider (\ref{3B}). Define two characteristic functions $\nu_0(x,y)=1_{|x-y|^2\leq|t-t'|}(x,y)$, and $\nu_1(x,y)=1_{|x-y|^2\geq|t-t'|}(x,y)$. Define the following operator \begin{equation*}
		T_\sigma^{m,n}(F,G)=\int_{\mathbb{R}^{2d+2}}F(t,x)G(t',y)\nu_\sigma(x,y)\sum_{j_0\in\mathbf{A}_m}\sum_{j_1\in\mathbf{A}_n} \left|I_{j_0}(t-t',x-y)I_{j_1}(t-t',x-y)\right|dxdydtdt',
	\end{equation*}where $m,n\in\{1,2,3\}$, $\sigma\in\{0,1\}$. To prove (\ref{3B}), it suffices to show the following inequality \begin{equation}\label{3E}
		\left|T_\sigma^{m,n}(F,G)\right|\lesssim\| F\|_{L_t^2L_x^p}\| G\|_{L_t^2L_x^p}
	\end{equation}holds for every $m,n\in\{1,2,3\}$, $\sigma\in\{0,1\}$.
	
	\begin{rem}
		We cannot derive (\ref{3E}) directly by applying Lemma \ref{32}. For instance, to estimate $T_1^{1,1}$, we can take $\delta=2s$ in (B1) and use Young's convolution inequality to yield \begin{align*}
			\left|T_1^{1,1}(F,G)\right|\lesssim&\int_{\mathbb{R}^{2}}\int_{|x-y|^2\geq|t-t'|}|F(t,x)||G(t',y)||x-y|^{-4s}|t-t'|^{-(d-4s)}dxdydtdt'\\=&\int_{\mathbb{R}^{2}}\int_{|x-y|^2\geq1}\left|F(t,|t-t'|^\frac{1}{2}x)\right|\left|G(t',|t-t'|^\frac{1}{2}y)\right||x-y|^{-4s}|t-t'|^{2s}dxdydtdt'\\\leq&\int_{\mathbb{R}^{2}}\Vert F(t,\cdot)\Vert_{L_x^p}\Vert G(t',\cdot)\Vert_{L_x^p}|t-t'|^{-1}dtdt'.
		\end{align*}We see that this inequality holds if and only if $1< p<2d/(d+1)$. In fact, applying Young's convolution inequality and appropriate estimates in Lemma \ref{32} will give \begin{equation}\label{fail}
			\left|T_\sigma^{m,n}(F,G)\right|\lesssim\int_{\mathbb{R}^{2}}\Vert F(t,\cdot)\Vert_{L_x^p}\Vert G(t',\cdot)\Vert_{L_x^p}|t-t'|^{-1}dtdt'
		\end{equation}for all $1\leq p<2d/(d+1)$, for all cases $m,n\in\{1,2,3\}$, $\sigma\in\{0,1\}$. However, we cannot obtain (\ref{3E}) from (\ref{fail}), since the Hardy--Littlewood--Sobolev inequality fails to apply with respect to $|t-t'|^{-1}$. A similar obstacle arises in the proof of the classical Strichartz estimates (\ref{classical Strichartz}) at the endpoint $(q,r)=(2,2d/(d-2))$, $d\geq3$. This problem was elegantly settled by Keel--Tao through a highly original bilinear interpolation argument. (See, \cite{Keel-Tao}, Section 6.) In Subsection 3.2, we show that following a similar strategy to \cite{Keel-Tao} allows us to bypass this obstacle and prove Theorem \ref{main1}.
	\end{rem}
	
	\subsection{The bilinear interpolation method.}\label{s2}
	Recall that our goal is to prove (\ref{3E}). Take $1<p^\ast<2d/(d+1)$ and fix it. Let $s^\ast$ be given by $2s^\ast=d/p^\ast-1$. Choose $\kappa\in C_c^\infty(\mathbb{R})$ with \begin{equation*}
		\text{supp}\kappa=[1,2];\quad\sum_{k\in\mathbb{Z}}\kappa(2^{-k}t)=1.
	\end{equation*}The existence of $\kappa$ is easily justified. Denote $\kappa_k(t)=\kappa(2^{-k}t)$, and define a bilinear operator
	\begin{equation*}
		\mathcal{T}_{k,\sigma,\delta,p^\ast}(F,G)=\int_{\mathbb{R}^{2d+2}}\kappa_k(t-t')F(t,x)G(t',y)\nu_\sigma(x,y)|x-y|^{-2\delta}|t-t'|^{-(d-2s^\ast-\delta)}dxdydtdt'.
	\end{equation*}Note that for every $m,n\in\{1,2,3\}$, if $\delta$ is chosen as we determined in Lemma \ref{32}, then (\ref{3D}) implies \begin{equation*}
		|T_\sigma^{m,n}(F,G)|\lesssim\sum_{k\in\mathbb{Z}}|\mathcal{T}_{k,\sigma,\delta,p^\ast}(F,G)|
	\end{equation*}for every fixed $p^\ast$ and every $\sigma\in\{0,1\}$. Thus, to prove (\ref{3E}), it suffices to show that the following bilinear mapping 
	\begin{equation*}
		\mathtt{T}_{\sigma,\delta,p^\ast}(F,G)=\left(\mathcal{T}_{k,\sigma,\delta,p^\ast}(F,G)\right)_{k\in\mathbb{Z}}
	\end{equation*}is $L_t^2L_x^p\times L_t^2L_x^p\longrightarrow\ell^1$ bounded for every $(\sigma,\delta)\in\{(0,0),(0,2s^\ast-d),(1,2s^\ast),(1,d-2s^\ast),(1,d/2)\}$. For this, the following estimates are key.
	\begin{lem}\label{33}
		Let $d\geq2$. Suppose $1<p^\ast<2d/(d+1)$. Let $a,b$ be such that $1/a+1/b>1$ and satisfy\begin{equation*}
			\max\left\{\frac{d+1}{d}-\frac{1}{p^\ast},\frac{1}{p^\ast}-\frac{1}{d}\right\}<\frac{1}{a}<1\qquad;\qquad\max\left\{\frac{d+1}{d}-\frac{1}{p^\ast},\frac{1}{p^\ast}-\frac{1}{d}\right\}<\frac{1}{b}<1.
		\end{equation*}Define \begin{equation*}
			\beta(a,b)=d+1-\frac{d}{2}\left(\frac{1}{a}+\frac{1}{b}\right).
		\end{equation*}Then, for every pair $(\sigma,\delta)\in\{(0,0),(0,2s^\ast-d),(1,2s^\ast),(1,d-2s^\ast),(1,d/2)\}$, the inequality \begin{equation}\label{3F}
			|\mathcal{T}_{k,\sigma,\delta,p^\ast}(F,G)|\lesssim2^{-k\gamma(a,b)}\Vert F\Vert_{L_t^2 L_x^a}\Vert G\Vert_{L_t^2 L_x^b}
		\end{equation}is true for every $F\in L_t^2 L_x^a$, $G\in L_t^2 L_x^b$, where $\gamma(a,b)=\beta(p^\ast,p^\ast)-\beta(a,b)$.
	\end{lem}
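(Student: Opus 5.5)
The plan is to prove (\ref{3F}) by separating the time and space variables. On the support of $\kappa_k(t-t')$ we have $|t-t'|\sim2^k$, so the time factor $|t-t'|^{-(d-2s^\ast-\delta)}$ is comparable to the constant $2^{-k(d-2s^\ast-\delta)}$. The cutoff $\nu_\sigma$ restricts to $|x-y|\le|t-t'|^{1/2}$ when $\sigma=0$ and to $|x-y|\ge|t-t'|^{1/2}$ when $\sigma=1$, and on this support $|t-t'|^{1/2}\sim2^{k/2}$. Hence, up to harmless constants (replacing $|t-t'|^{1/2}$ by $2^{k/2}$ up to a bounded factor, which only enlarges the region by a fixed dilation), it suffices to bound
\begin{equation*}
2^{-k(d-2s^\ast-\delta)}\int_{\mathbb{R}^2}\kappa_k(t-t')\int_{\mathbb{R}^{2d}}|F(t,x)||G(t',y)|K_{k,\sigma,\delta}(x-y)\,dxdy\,dtdt',
\end{equation*}
where $K_{k,0,\delta}(z)=|z|^{-2\delta}1_{|z|\lesssim2^{k/2}}$ and $K_{k,1,\delta}(z)=|z|^{-2\delta}1_{|z|\gtrsim2^{k/2}}$.

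For fixed $t,t'$, I will estimate the inner spatial integral by Young's convolution inequality:
\begin{equation*}
\int_{\mathbb{R}^{2d}}|F(t,x)||G(t',y)|K_{k,\sigma,\delta}(x-y)\,dxdy\le\|F(t)\|_{L^a}\|G(t')\|_{L^b}\|K_{k,\sigma,\delta}\|_{L^c},\qquad \frac1c=2-\frac1a-\frac1b.
\end{equation*}
The hypotheses $1/a+1/b>1$ and $1/a,1/b<1$ give $0<1/c<1$, so $c\in(1,\infty)$ and Young's inequality applies. For the time integration, the kernel $\kappa_k(t-t')$ has $L^1$ norm $\sim2^k$, so Schur's test (or Young in $t$) gives
\begin{equation*}
\int\kappa_k(t-t')\|F(t)\|_{L^a}\|G(t')\|_{L^b}\,dtdt'\lesssim2^k\|F\|_{L_t^2L_x^a}\|G\|_{L_t^2L_x^b}.
\end{equation*}

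The substance of the proof is computing $\|K_{k,\sigma,\delta}\|_{L^c}$. Whenever it is finite, scaling gives $\|K_{k,\sigma,\delta}\|_{L^c}\sim2^{\frac k2(\frac dc-2\delta)}$. Finiteness requires $2\delta c<d$ in the case $\sigma=0$ (integrability at the origin) and $2\delta c>d$ in the case $\sigma=1$ (integrability at infinity). Collecting powers of $2^k$ and using $d/(2c)=d-\frac d2(\frac1a+\frac1b)$ and $2s^\ast=d/p^\ast-1$, the total exponent is
\begin{equation*}
1-(d-2s^\ast-\delta)+\tfrac{d}{2c}-\delta=\frac d{p^\ast}-\frac d2\Big(\frac1a+\frac1b\Big)=\beta(a,b)-\beta(p^\ast,p^\ast)=-\gamma(a,b).
\end{equation*}
In particular $\delta$ cancels, which is exactly the claimed decay $2^{-k\gamma(a,b)}$.

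The main obstacle is checking the integrability condition for each of the five pairs $(\sigma,\delta)$; this is where the lower bounds on $1/a$ and $1/b$ enter.
\begin{itemize}
\item $(0,0)$: the condition $0<d$ is trivial.
\item $(0,2s^\ast-d)$: here $\delta\le0$, so the kernel is locally bounded and there is nothing to check.
\item $(1,d/2)$: the condition $dc>d$ holds because $c>1$.
\item $(1,2s^\ast)$: the condition $4s^\ast>d/c$ is equivalent to $\frac1a+\frac1b>2\big(\frac{d+1}{d}-\frac1{p^\ast}\big)$. This follows from $1/a,1/b>\frac{d+1}d-\frac1{p^\ast}$.
\item $(1,d-2s^\ast)$: the condition $2(d-2s^\ast)>d/c$ is equivalent to $\frac1a+\frac1b>2\big(\frac1{p^\ast}-\frac1d\big)$. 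This follows from $1/a,1/b>\frac1{p^\ast}-\frac1d$.
\end{itemize}
The one subtlety I would double-check is that replacing $|t-t'|^{1/2}$ by $2^{k/2}$ in the cutoff is legitimate for $\delta<0$ and for both directions of the inequality. Since $|t-t'|\in[2^k,2^{k+1}]$, this only enlarges the region by a fixed dilation, so it should cost at most a constant.
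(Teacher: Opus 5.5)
Your proposal is correct and follows essentially the same route as the paper: on the support of $\kappa_k$ you freeze $|t-t'|\sim 2^k$, apply Young's inequality in $x$ with the truncated power kernel in $L^c$, $1/c=2-1/a-1/b$, and then apply Young's inequality in $t$ to gain the factor $2^k$, with the $\delta$-dependence cancelling to give $2^{-k\gamma(a,b)}$. The differences are cosmetic: you compute $\Vert K_{k,\sigma,\delta}\Vert_{L^c}$ by scaling rather than first rescaling $x\mapsto 2^{k/2}x$, and you treat $(0,2s^\ast-d)$ directly instead of by pointwise comparison with $(0,0)$; also, your integrability condition $1/a+1/b>2+2/d-2/p^\ast$ for $\delta=2s^\ast$ is the correct form of the condition the paper states with $1/d$ in place of $2/d$.
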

	
	\begin{rem}
		The above lemma is still insufficient to provide $\ell^1$ estimates directly for $\mathtt{T}_{\sigma,\delta,p^\ast}$, since $\gamma(p^\ast,p^\ast)=0$. We employ the following bilinear interpolation proposition (as a black box) to gain the necessary room to obtain this $\ell^1$ bound.
		\begin{prop}[\cite{Bergh}, Section 3.13, 5-b]\label{Bergh-Löfström}
			For given Banach spaces $A_0$, $A_1$, $B_0$, $B_1$, $C_0$, $C_1$, if a bilinear transformation	
			\begin{equation}\label{3G}
				L:
				\begin{cases}
					A_0\times B_0\longrightarrow C_0 \\ A_0\times B_1\longrightarrow C_1 \\ A_1\times B_0\longrightarrow C_1
				\end{cases}
			\end{equation}
			is bounded, then
			\begin{equation}\label{3H}
				L:(A_0,A_1)_{\theta_0,p_0}\times (B_0,B_1)_{\theta_1,p_1}\longrightarrow(C_0,C_1)_{\theta,1}
			\end{equation}
			is also bounded whenever $p_0,p_1\in[1,\infty]\text{ satisfy }1/p_0+1/p_1\geq 1\text{, and }\theta_0,\theta_1,\theta\in (0,1)$ satisfy $\theta=\theta_0+\theta_1$.
		\end{prop}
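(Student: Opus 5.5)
This is Bergh--L\"ofstr\"om's bilinear interpolation theorem, used above as a black box. If I were to prove it, I would use the discrete $J$-method. Write elements of the interpolation spaces as $J$-decompositions: $a=\sum_{m\in\mathbb{Z}}a_m$ with $a_m\in A_0\cap A_1$ and
\begin{equation*}
\bigl\|\bigl(2^{-m\theta_0}J(2^m,a_m)\bigr)_m\bigr\|_{\ell^{p_0}}\lesssim\|a\|_{(A_0,A_1)_{\theta_0,p_0}},
\end{equation*}
and similarly $b=\sum_n b_n$ with weights $2^{-n\theta_1}$ in $\ell^{p_1}$. Here $J(t,x)=\max\{\|x\|_{X_0},t\|x\|_{X_1}\}$. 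The target is a $J$-space with fine index $1$, so it suffices to exhibit some decomposition $L(a,b)=\sum_l c_l$ with
\begin{equation*}
\sum_l 2^{-l\theta}J(2^l,c_l)\lesssim\|a\|\,\|b\|.
\end{equation*}
By bilinearity and continuity, $L(a,b)=\sum_{m,n}L(a_m,b_n)$, with convergence in $C_0+C_1$ once the estimates below are in hand. The natural choice is to group the terms along anti-diagonals, $c_l=\sum_{m+n=l}L(a_m,b_n)$, matching the additive rule $\theta=\theta_0+\theta_1$.

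The key pointwise estimate comes from the three hypotheses in (\ref{3G}). The $C_0$-norm is controlled by $\|a_m\|_{A_0}\|b_n\|_{B_0}\le J(2^m,a_m)J(2^n,b_n)$. The $C_1$-norm is controlled by both $\|a_m\|_{A_0}\|b_n\|_{B_1}$ and $\|a_m\|_{A_1}\|b_n\|_{B_0}$, hence by $\min(2^{-m},2^{-n})\,J(2^m,a_m)J(2^n,b_n)$. This gives
\begin{equation*}
J\bigl(2^{l},L(a_m,b_n)\bigr)\lesssim\max\bigl\{1,2^{l}\min(2^{-m},2^{-n})\bigr\}\,J(2^m,a_m)J(2^n,b_n).
\end{equation*}
After inserting the weights, the double sum becomes a bilinear form in the $\ell^{p_0}$ and $\ell^{p_1}$ sequences $\alpha_m=2^{-m\theta_0}J(2^m,a_m)$ and $\beta_n=2^{-n\theta_1}J(2^n,b_n)$. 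Its kernel depends on $m$, $n$ and the chosen level $l$.

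The last step is to sum this bilinear form. Here $1/p_0+1/p_1\ge1$ is exactly what makes $\ell^{p_0}\times\ell^{p_1}\to\ell^1$ boundedness follow from Young's inequality, provided the kernel is an $\ell^1$-summable function of $m-n$. So the aim is to reduce the kernel to a geometrically decaying function of $m-n$, using $\theta_0,\theta_1>0$ and $\theta<1$.

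I expect the main obstacle to be the choice of the level $l$ attached to each pair $(m,n)$. With $l=m+n$ the $C_1$-part carries the factor $2^{\min(m,n)}$, which is not controlled when both indices are large. With $l=\max(m,n)$ the weights no longer match $\theta_0m+\theta_1n$. The first thing I would try is $l=l(m,n)$ equal to $\max(m,n)$ plus a shift, rebalancing the weights so that the loss is $2^{-\epsilon|m-n|}$ with $\epsilon\approx\min(\theta_0,\theta_1,1-\theta)>0$. If that fails, I would move to the $K$-method on the target side, estimating $K(2^l,L(a,b))$ by splitting the sum $\sum_{m,n}$ into $m\le l$ and $n\le l$ pieces sent to $C_0$ and the complementary pieces sent to $C_1$ via whichever of the two mixed bounds is favourable. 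The $\ell^1$ sum in $l$ would then again close by Young's inequality under $1/p_0+1/p_1\ge1$.
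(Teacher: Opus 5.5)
You have the standard argument, and it closes. The paper itself gives no proof: it imports the statement from Bergh--L\"ofstr\"om as a black box, as Keel--Tao do. So your outline adds an argument rather than offering an alternative to one in the text.

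The obstacle you anticipate is not real. The level $l(m,n)=\max(m,n)$, with no shift, already works. Take $c_l=\sum_{\max(m,n)=l}L(a_m,b_n)$. Your pointwise bound gives $J(2^l,L(a_m,b_n))\lesssim J(2^m,a_m)J(2^n,b_n)$, and because $\theta=\theta_0+\theta_1$,
\[
2^{-\theta\max(m,n)}J(2^m,a_m)J(2^n,b_n)=2^{-\theta_1(m-n)_+-\theta_0(n-m)_+}\,\alpha_m\beta_n ,\qquad x_+=\max\{x,0\}.
\]
So the weights mismatch exactly by the geometrically decaying kernel you wanted, with $\epsilon=\min(\theta_0,\theta_1)$. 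Shifting the level only costs: a factor $2^{(1-\theta)s}$ for an upward shift $s>0$. That is where the unnecessary $1-\theta$ in your guess comes from. Each $c_l$ is an infinite sum, over $n\le l$ with $m=l$ and over $m<l$ with $n=l$. It still converges in $C_0\cap C_1$, since $\theta_0,\theta_1>0$ and $\alpha,\beta$ are bounded.

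Your $K$-method fallback also works and gives the same kernel. Splitting according to whether $\max(m,n)\le l$ or $\max(m,n)>l$ gives $K(2^l,L(a,b))\lesssim\sum_{m,n}\min\{1,2^{l-\max(m,n)}\}J(2^m,a_m)J(2^n,b_n)$. Then $\sum_l2^{-l\theta}\min\{1,2^{l-M}\}\sim2^{-\theta M}$, precisely because $0<\theta<1$.

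In either version each hypothesis has a clear role:
\begin{itemize}
\item $\theta_0,\theta_1>0$ make the kernel $\ell^1$;
\item $1/p_0+1/p_1\ge1$ gives Young/H\"older through $\ell^{p_0}\subset\ell^{p_1'}$;
\item $0<\theta<1$ is needed for the target space.
\end{itemize}

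One point needs stating more carefully: what $L(a,b)$ means. There is no $A_1\times B_1$ bound, so $L$ is not a priori defined on $(A_0+A_1)\times(B_0+B_1)$. Prove the estimate on $(A_0\cap A_1)\times(B_0\cap B_1)$ first. There the identity $L(a,b)=\sum_{m,n}L(a_m,b_n)$ in $C_0+C_1$ is justified by the bounded extensions of $L$ to $(A_0+A_1)\times(B_0\cap B_1)$ and to $(A_0\cap A_1)\times(B_0+B_1)$. Then extend by density. Density is available for $p_0,p_1<\infty$, which covers the paper's use with $p_0=p_1=2$.
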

	\end{rem}
	We provide a proof of Lemma \ref{33} later. Firstly, let us see how to deduce (\ref{3E}) by using Lemma \ref{33} and Proposition \ref{Bergh-Löfström}. Take a sufficiently small $\varepsilon>0$, let $a_0,a_1$ satisfy $1/a_0=1/p^\ast-\varepsilon$, $1/a_1=1/p^\ast+2\varepsilon$. Set
	\begin{equation*}
		A_0=B_0= L_t^2 L_x^{a_0},\quad A_1=B_1= L_t^2 L_x^{a_1},\quad C_0=\ell_{\gamma(a_0,a_0)}^\infty,\quad C_1=\ell_{\gamma(a_0,a_1)}^\infty.
	\end{equation*}
	Thus, if Lemma \ref{33} is proved to be true, then $\mathtt{T}_{\sigma,\delta,p^\ast}$ satisfies (\ref{3G}). Denote $Q_1=(1/a_1,1/a_0)$, $Q_2=(1/a_0,1/a_1)$ and $Q_3=(1/a_0,1/a_0)$. We emphasize that the region $(Q_1Q_2Q_3)^o$ intentionally contains $(1/p^\ast,1/p^\ast)$.
	
	\vspace{2ex}
	
	We establish the following steps to gain the boundness of $\mathtt{T}_{\sigma,\delta,p^\ast}$. First, the Lions--Peetre formula (see, \cite{Lions})
	\begin{equation*}
		(L_t^2 L_x^{p_0}, L_t^2 L_x^{p_1})_{\theta,2}= L_t^2 L_x^{p_\theta ,2},\qquad\text{whenever }p_0\neq p_1,\quad \frac{1}{p_\theta}=\frac{1-\theta}{p_0}+\frac{\theta}{p_1},\quad 0<\theta<1
	\end{equation*}
	gives $(L_t^2 L_x^{a_0}, L_t^2 L_x^{a_1})_{1/3,2}= L_t^2 L_x^{p^\ast ,2}$, where $L^{p^\ast,2}$ denotes a Lorentz space. Moreover, applying the following interpolation identity (see, \cite{Bergh}, Theorem 5.6.1)
	\begin{equation*}
		(\ell_{\beta_0}^\infty,\ell_{\beta_1}^\infty)_{\theta,1}=\ell_{\beta_\theta}^1,\qquad\text{whenever }\beta_0\neq\beta_1,\quad \beta_\theta=(1-\theta)\beta_0+\theta\beta_1,\quad 0<\theta<1,
	\end{equation*}
	one obtains $(\ell_{\gamma(a_0,a_0)}^\infty,\ell_{\gamma(a_0,a_1)}^\infty)_{2/3,1}=\ell_0^1=\ell^1$.
	
	\vspace{2ex}
	
	According to the interpolation result (\ref{3H}), we obtain a bounded bilinear transformation
	\begin{equation*}
		\mathtt{T}_{\sigma,\delta,p^\ast}: L_t^2 L_x^{p^\ast,2} \times L_t^2 L_x^{p^\ast,2} \longrightarrow \ell^1.
	\end{equation*}We recall that it suffices to show that $\mathtt{T}_{\sigma,\delta,p^\ast}$ is $L_t^2L_x^p\times L_t^2L_x^p\longrightarrow\ell^1$ bounded. Since $p^\ast<2d/(d+1)<2$, which ensures $L_x^{p^\ast}=L_x^{p^\ast,p^\ast} \subset L_x^{p^\ast,2} $, it shows that \begin{equation*}
		\mathtt{T}_{\sigma,\delta,p^\ast}:L_t^2 L_x^p\times L_t^2 L_x^p\longrightarrow \ell^1
	\end{equation*}is bounded whenever $1<p=p^\ast<2d/(d+1)$. In conclusion, once we prove Lemma \ref{33}, we finish the proof of (\ref{3E}) for all $(d+1)/2d<1/p<1$, all $m,n\in\{1,2,3\}$ and all $\sigma\in\{0,1\}$. For more details about Lorentz spaces and their real interpolation theory, we refer the reader to \cite{Bergh}. 
	
	\begin{figure}[H]
		\centering
		\begin{tikzpicture}
			\draw [->, loosely dotted, thin] (0, 0) -- (6.5, 0) node [right] {$\frac{1}{b}$};
			\draw [->, loosely dotted, thin] (0, 0) -- (0, 6.5) node [above] {$\frac{1}{a}$};
			\fill [lightgray!40!white] (2.5, 6) -- (6, 6) -- (6, 2.5) -- (3.5, 2.5) -- (2.5, 3.5) -- cycle;
			\fill [red!20!white] (4.9,4.3) -- (4.3,4.3) -- (4.3,4.9) -- cycle;
			\draw [dotted, thin] (6, 0) -- (6, 6);
			\draw [dotted, thin] (6, 0) -- (0, 6);
			\draw [dotted, thin] (0, 6) -- (6, 6);
			\draw [dotted, thin] (0, 0) -- (0, 6);
			\draw [dotted, thin] (0, 0) -- (6, 0);
			\draw [dotted, thin] (0, 4.5) -- (6, 4.5);
			\draw [dotted, thin] (4.5, 0) -- (4.5, 6);
			\draw [dotted, thin]  (2.5, 0) -- (2.5, 6);
			\draw [dotted, thin]  (0, 2.5) -- (6, 2.5);
			\fill (0, 0) circle (0.01cm) node [below left] {$O$};
			\fill (0, 6) circle (0.01cm) node [left] {$1$};
			\fill (6, 0) circle (0.01cm) node [below] {$1$};
			\fill (0, 2.5) circle (0.01cm) node [left] {$w$};
			\fill (2.5, 0) circle (0.01cm) node [below] {$w$};
			\fill (0, 4.5) circle (0.01cm) node [left] {$\frac{1}{p^\ast}$};
			\fill (4.5, 0) circle (0.01cm) node [below] {$\frac{1}{p^\ast}$};
			\fill (4.5, 4.5) circle (0.04cm);
			\fill [red,thick] (4.9,4.3) circle (0.03cm) node [below right] {\tiny$Q_1$};
			\fill [red,thick] (4.3,4.3) circle (0.03cm) node [below left] {\tiny$Q_3$};
			\fill [red,thick] (4.3,4.9) circle (0.03cm) node [above left] {\tiny$Q_2$};
			\draw [red] (4.9,4.3) -- (4.3,4.3);
			\draw [red] (4.3,4.3) -- (4.3,4.9);
			\draw [red] (4.3,4.9) -- (4.9,4.3);
		\end{tikzpicture}	
		\caption{The interpolation region, with $w=\max\left\{\frac{d+1}{d}-\frac{1}{p^\ast},\frac{1}{p^\ast}-\frac{1}{d}\right\}$.}
	\end{figure}

	\begin{proof}[Proof of Lemma \ref{33}.]
		First, we note that \begin{equation}\label{ggg}
			|\mathcal{T}_{k,0,2s^\ast-d,p^\ast}(F,G)|\lesssim|\mathcal{T}_{k,0,0,p^\ast}(F,G)|
		\end{equation}is true for every non-negative $F$ and $G$, since $d-2s^\ast>0$. For each $\mathcal{T}_{k,\sigma,\delta,p^\ast}$, by changing variables, we have\begin{align*}
			&\quad|\mathcal{T}_{k,\sigma,\delta,p^\ast}(F,G)|\\&\lesssim2^{-k(d-2s^\ast-\delta)}\int_{\mathbb{R}^{2d+2}}\kappa_k(t-t')|F(t,x)||G(t',y)|\nu_\sigma(x,y)|x-y|^{-2\delta}dxdydtdt'\\&=2^{-k(d-2s^\ast)}2^{kd}\int_{\mathbb{R}^{2d+2}}\kappa_k(t-t')|F(t,2^{\frac{k}{2}}x)||G(t',2^{\frac{k}{2}}y)|\nu_\sigma(2^{\frac{k}{2}}x,2^{\frac{k}{2}}y)|x-y|^{-2\delta}dxdydtdt'.
		\end{align*}Since $|t-t'|\sim 2^k$ on the support of $\kappa_k$, we replace $\nu_0(2^{k/2}x,2^{k/2}y)$ and $\nu_1(2^{k/2}x,2^{k/2}y)$, respectively with $1_{|x-y|^2\lesssim1}$ and 
		$1_{|x-y|^2\gtrsim1}$. 
		
		\vspace{2ex}
		
		Consider the case $\sigma=0$. Proceeding with Young's convolution inequality, one can easily obtain the following estimate holds for $\delta=0$, and thus for $\delta=2s^\ast-d$. \begin{equation*}
			|\mathcal{T}_{k,0,\delta,p^\ast}(F,G)|\lesssim2^{-k(d-2s^\ast)}2^{kd}2^{-k\frac{d}{2}\left(\frac{1}{a}+\frac{1}{b}\right)}\int_{\mathbb{R}^{2}}\kappa_k(t-t')\Vert F(t,\cdot)\Vert_{L_x^a} \Vert G(t',\cdot)\Vert_{L_x^b} dtdt'.
		\end{equation*}Regarding $\sigma=1$, by applying Young's convolution inequality, similarly we have \begin{equation*}
			|\mathcal{T}_{k,1,\delta,p^\ast}(F,G)|\lesssim2^{-k(d-2s^\ast)}2^{kd}2^{-k\frac{d}{2}\left(\frac{1}{a}+\frac{1}{b}\right)}\int_{\mathbb{R}^{2}}\kappa_k(t-t')\Vert F(t,\cdot)\Vert_{L_x^a} \Vert G(t',\cdot)\Vert_{L_x^b}\|1_{|\cdot|^2\gtrsim1}|\cdot|^{-2\delta}\|_{L^c} dtdt',
		\end{equation*}where $1/c=2-1/a-1/b$. To see that the convergence condition $2\delta c>d$ holds, we remark that $4s^\ast c>d$ holds if $1/a+1/b>2-2/p^\ast+1/d$; $2(d-2s^\ast)c>d$ holds if $1/a+1/b>2/p^\ast-2/d$; $dc>d$ holds if $1/a+1/b>1$. Thus,\begin{equation}\label{3I}
			|\mathcal{T}_{k,\sigma,\delta,p^\ast}(F,G)|\lesssim2^{-k(d-2s^\ast)}2^{kd}2^{-k\frac{d}{2}\left(\frac{1}{a}+\frac{1}{b}\right)}\int_{\mathbb{R}^{2}}\kappa_k(t-t')\Vert F(t,\cdot)\Vert_{L_x^a} \Vert G(t',\cdot)\Vert_{L_x^b} dtdt'
		\end{equation}holds for every $(\sigma,\delta)\in\{(0,0),(0,2s^\ast-d),(1,2s^\ast),(1,d-2s^\ast),(1,d/2)\}$. 
		
		\vspace{2ex}
		
		Finally, we apply Young's convolution inequality for temporal variables $t,t'$ in (\ref{3I}) and obtain\begin{align*}
			|\mathcal{T}_{k,\sigma,\delta,p^\ast}(F,G)|\lesssim&2^{-k(d-2s^\ast)}2^{kd}2^{-k\frac{d}{2}\left(\frac{1}{a}+\frac{1}{b}\right)}2^{2k}\int_{\mathbb{R}^{2}}\kappa(t-t')\Vert F(2^kt,\cdot)\Vert_{L_x^a} \Vert G(2^kt',\cdot)\Vert_{L_x^b} dtdt'\\\leq&2^{-k(d-2s^\ast)}2^{k\beta(a,b)}\Vert F\Vert_{L_t^2 L_x^a}\Vert G\Vert_{L_t^2 L_x^b}.
		\end{align*}Since $\beta(p^\ast,p^\ast)=d-2s^\ast$, we obtain (\ref{3F}) and conclude Lemma \ref{33}.
	\end{proof}

	\section{The fractional Schrödinger propagator: Proof of Theorem \ref{main2}.}
	
	Let $d\geq2$. Suppose $4d/(d-1)<r<\infty$. Define \begin{equation*}
		s_\mu=\frac{d}{2}-\frac{d}{r}-\frac{\mu}{4},
	\end{equation*}and recall that $\widetilde{r}=2r/(r-2)$. For every $\mu\in(0,\infty)\setminus{1}$, we first prepare a frequency localised setting. Fix a positive, sufficiently small $\varepsilon_0<1/10\mu$ (say). To prove Theorem \ref{main2}, it suffices to show that \begin{equation}\label{41}
		\left\Vert W_1e^{it|\nabla|^\mu}P_\mu^2|\nabla|^{-2s_\mu}e^{-it|\nabla|^\mu}W_2\right\Vert_{\mathfrak{S}^2}\lesssim\Vert W_1\Vert_{L_t^4 L_x^{\widetilde{r}}} \Vert W_2\Vert_{L_t^4 L_x^{\widetilde{r}}}
	\end{equation}holds for all $\mu\in(0,\infty)\setminus{1}$, where $P_\mu$ is given by $\widehat{P_\mu f}(\xi)=\phi_\mu(\xi)\widehat{f}(\xi)$ with \begin{equation*}
		\phi_\mu\in C_c^\infty;\quad\text{supp}\phi_\mu=\left\{\xi;|\xi|\leq\frac{1}{\mu}\right\};\quad\phi_\mu(\xi)\leq1;\quad\phi_\mu(\xi)\equiv1,\text{ if } |\xi|\leq \frac{1}{\mu}-\varepsilon_0.
	\end{equation*}
	
	We denote $p=\widetilde{r}/2$, and remark that $4s_\mu=2d/p-\mu$. To derive (\ref{41}), it suffices to show that \begin{equation}\label{42}
		|T_\mu(F,G)|\lesssim\Vert F\Vert_{L_t^2 L_x^p}\Vert G\Vert_{L_t^2 L_x^p},
	\end{equation}holds for all $\mu\in(0,\infty)\setminus{1}$, and for all non-negative $F,G\in L_t^2 L_x^p$. Here \begin{equation*}
		T_\mu(F,G)=\int_{\mathbb{R}^{2d+2}}F(t,x)G(t',y)\sum_{j_0,j_1} \left|I_{j_0}^\mu(t-t',x-y)I_{j_1}^\mu(t-t',x-y)\right|dxdydtdt',
	\end{equation*}with \begin{equation*}
		I_j^\mu(t,x)=2^{-j(d-2s_\mu)}\int_{\mathbb{R}^d}e^{i(2^{-j}x\cdot\xi-2^{-\mu j}t|\xi|^\mu)}\psi_\mu(\xi)|\xi|^{-2s_\mu}d\xi.
	\end{equation*}The dyadic decomposition function $\psi_\mu$ satisfies\begin{equation*}
		\psi_\mu\in C_c^\infty;\quad\phi_\mu^2(\xi)=\sum_{j=1}^{\infty}\psi_\mu(2^j\xi),
	\end{equation*}but we will set up the support of $\psi_\mu$ later.
	
	\vspace{2ex}
	
	We establish the proof of (\ref{42}) by mimicking the structure in Section 3. Since the derivative of phase function $\Phi(\xi)=2^{-j}x\cdot\xi-2^{-\mu j}t|\xi|^\mu$ is $\nabla\Phi(\xi)=2^{-j}x-\mu2^{-\mu j}t|\xi|^{\mu-2}\xi$, which means that the value of $|\nabla\Phi(\xi)|$ is controlled by $|\xi|^{\mu-1}$, we need to separate this proof into the $0<\mu<1$ case, and the $\mu>1$ case. However, the proofs for both cases entail lengthy computations and present no technical differences. Since the case $\mu>1$ more closely resembles the Schrödinger case $\mu=2$, which we have explicitly discussed in the previous section, here we only provide details for $0<\mu<1$.
	
	\vspace{2ex}
	
	We start from deducing oscillatory estimates. Set\begin{equation*}
		\text{supp}\psi_\mu=\left\{\xi;\mu\leq|\xi|^{1-\mu}\leq2\mu\right\}.
	\end{equation*}Then, the triangle inequality implies \begin{equation}\label{43}
		|\nabla\Phi(\xi)|\gtrsim\begin{cases}
			2^{-j}|x|,& 2|t|\leq2^{(\mu-1)j}|x|;\\2^{-\mu j}|t|,& 2^{(\mu-1)j}|x|\leq2^{-2}|t|.
		\end{cases}
	\end{equation}We give the following notation for collections of $j$ with fixed $x,y,t,t'$. \begin{itemize}
		\item $\mathbf{B}_1=\{j;1/4|t-t'|\leq2^{(\mu-1)j}|x-y|\leq2|t-t'|\}$;
		\item $\mathbf{B}_2=\{j;2|t-t'|\leq2^{(\mu-1)j}|x-y|\}$;
		\item $\mathbf{B}_3=\{j;2^{(\mu-1)j}|x-y|\leq1/4|t-t'|\}$.
	\end{itemize}We recall that for $j\in\mathbf{B}_1$, we have a non-zero Hessian of $\Phi$ on the support of $\psi_\mu$, and\begin{equation*}
		2^{-j}\sim\left(\frac{|t-t'|}{|x-y|}\right)^\frac{1}{1-\mu}.
	\end{equation*}
	
	Building on the argument in Section \ref{s1}, we deduce the following inequalities
	\begin{equation}\label{44}
		\sum_{j\in\mathbf{B}_1}|I_j^\mu(t-t',x-y)|\lesssim2^{-j(d-2s_\mu)}(2^{-\mu j}|t-t'|)^{-d/2}\sim2^{-j(d-2s_\mu)}(2^{-j}|x-y|)^{-d/2};
	\end{equation}\begin{equation}\label{45}
		\sum_{j\in\mathbf{B}_2\cup\mathbf{B}_3}|I_j^\mu(t-t',x-y)|\lesssim\frac{1}{|x-y|^{d-2s_\mu}};
	\end{equation}\begin{equation}\label{46}
		\sum_{j\in\mathbf{B}_2\cup\mathbf{B}_3}|I_j^\mu(t-t',x-y)|\lesssim\frac{1}{|t-t'|^\frac{d-2s_\mu}{\mu}},
	\end{equation}and \begin{equation}\label{47}
		|I_j^\mu(t-t',x-y)|\lesssim 2^{-j(d-2s_\mu)}.
	\end{equation}As with our proof of Theorem \ref{main1} (in particular, see Remark 8), it is instructive to see how estimates the above inequalities combined with Young's convolution inquality just fail to give a proof of Theorem \ref{main2}.
	
	\vspace{2ex}
	
	\noindent\underline{\textbf{Case 1: On $j_0,j_1\in\mathbf{B}_1$}:}
	
	\vspace{2ex}
	
	Firstly, by applying (\ref{44}) and Young's convolution inequality, we deduce \begin{align*}
		&\int_{\mathbb{R}^{2d+2}}|F(t,x)||G(t',y)|1_{|x-y|^\mu\leq|t-t'|}(x,y) \left|I_{j_0}^\mu(t-t',x-y)I_{j_1}^\mu(t-t',x-y)\right|dxdydtdt'\\\lesssim&\int_{|x-y|^\mu\leq|t-t'|}|F(t,x)||G(t',y)|\left(\frac{|t-t'|}{|x-y|}\right)^{\frac{2d-\mu d-4s_\mu}{1-\mu}}\frac{1}{|t-t'|^d}dxdydtdt'\\\leq&\int_{\mathbb{R}^2}\Vert F(t,\cdot)\Vert_{L_x^p}\Vert G(t',\cdot)\Vert_{L_x^p}\left\Vert\frac{1_{|\cdot|^\mu\leq1}}{|\cdot|^{\frac{2d-\mu d-4s_\mu}{1-\mu}}}\right\Vert_{L_x^c}\\&\qquad\qquad\qquad\frac{1}{|t-t'|^{d-\frac{2d-\mu d-4s_\mu}{1-\mu}}}\frac{1}{|t-t'|^{\frac{2d-\mu d-4s_\mu}{\mu(1-\mu)}}}|t-t'|^\frac{2d}{\mu}\frac{1}{|t-t'|^{\frac{2d}{p\mu}}}dtdt',
	\end{align*}where $1/c=2-2/p$. Since $4s_\mu=2d/p-\mu$, we observe the following:
	\begin{itemize}
		\item To ensure convergence of the $L_x^c$ norm, we need\begin{equation}\label{48}
			\frac{2d-\mu d-4s_\mu}{1-\mu}<\frac{d}{c},
		\end{equation}and we note that this is equivalent to $1/p>(d+1)/2d$.
		\item The order of $1/|t-t'|$ is \begin{equation*}
			d-\frac{2d-\mu d-4s_\mu}{1-\mu}+\frac{2d-\mu d-4s_\mu}{\mu(1-\mu)}-\frac{2d}{\mu}+\frac{2d}{p\mu}=d-\frac{2d}{\mu}+\frac{2d}{p\mu}+\frac{1}{\mu}\left(2d-\mu d-\frac{2d}{p}+\mu\right),
		\end{equation*}which equals to $1$.
	\end{itemize}
	
	On the other hand, by applying (\ref{47}), the fact $\#\mathbf{B}_1\sim1$, and Young's convolution inequality, we deduce\begin{align*}
		&\int_{\mathbb{R}^{2d+2}}|F(t,x)||G(t',y)|1_{|x-y|^\mu\geq|t-t'|}(x,y) \left|I_{j_0}^\mu(t-t',x-y)I_{j_1}^\mu(t-t',x-y)\right|dxdydtdt'\\\lesssim&\int_{|x-y|^\mu\geq|t-t'|}|F(t,x)||G(t',y)|2^{-(j_0+j_1)(d-2s_\mu)}dxdydtdt'\\\sim&\int_{|x-y|^\mu\geq|t-t'|}|F(t,x)||G(t',y)|\left(\frac{|t-t'|}{|x-y|}\right)^{\frac{2d-4s_\mu}{1-\mu}}dxdydtdt'\\\leq&\int_{\mathbb{R}^2}\Vert F(t,\cdot)\Vert_{L_x^p}\Vert G(t',\cdot)\Vert_{L_x^p}\left\Vert\frac{1_{|\cdot|^\mu\geq1}}{|\cdot|^{\frac{2d-4s_\mu}{1-\mu}}}\right\Vert_{L_x^c}|t-t'|^{\frac{2d-4s_\mu}{1-\mu}}\frac{1}{|t-t'|^{\frac{2d-4s_\mu}{\mu(1-\mu)}}}|t-t'|^\frac{2d}{\mu}\frac{1}{|t-t'|^{\frac{2d}{p\mu}}}dtdt'.
	\end{align*}
	
	\begin{itemize}
		\item To ensure convergence of the $L_x^c$ norm, we need\begin{equation*}
			\frac{2d-4s_\mu}{1-\mu}>\frac{d}{c},
		\end{equation*}and this holds if and only if $1/p<(2d+1)/2d$, which is trivial.
		\item The order of $1/|t-t'|$ is \begin{equation*}
			-\frac{2d-4s_\mu}{1-\mu}+\frac{2d-4s_\mu}{\mu(1-\mu)}-\frac{2d}{\mu}+\frac{2d}{p\mu}=\frac{1}{\mu}\left(2d-\frac{2d}{p}+\mu\right)-\frac{2d}{\mu}+\frac{2d}{p\mu},
		\end{equation*}which equals to $1$.
	\end{itemize}
	
	\vspace{2ex}
	
	\noindent\underline{\textbf{Case 2: On $j_0,j_1\in\mathbf{B_2\cup B_3}$}:}
	
	\vspace{2ex}
	
	Applying (\ref{45}), (\ref{46}) and the Young's convolution inequality yields \begin{align*}
		&\int_{|x-y|^\mu\geq|t-t'|}|F(t,x)||G(t',y)|\sum_{j_0,j_1\in\mathbf{B_2\cup B_3}} \left|I_{j_0}^\mu(t-t',x-y)I_{j_1}^\mu(t-t',x-y)\right|dxdydtdt'\notag\\\lesssim&\int_{|x-y|^\mu\geq|t-t'|}|F(t,x)||G(t',y)|\frac{1}{|x-y|^{2d-4s_\mu}}dxdydtdt'\notag\\\leq&\int_{\mathbb{R}^2}\Vert F(t,\cdot)\Vert_{L_x^p}\Vert G(t',\cdot)\Vert_{L_x^p}\left\Vert\frac{1_{|\cdot|^\mu\geq1}}{|\cdot|^{2d-4s_\mu}}\right\Vert_{L_x^c}\frac{1}{|t-t'|^{\frac{2d-4s_\mu}{\mu}}}|t-t'|^\frac{2d}{\mu}\frac{1}{|t-t'|^{\frac{2d}{p\mu}}}dtdt',
	\end{align*}and\begin{align*}
		&\int_{|x-y|^\mu\leq|t-t'|}|F(t,x)||G(t',y)|\sum_{j_0,j_1\in\mathbf{B_2\cup B_3}} \left|I_{j_0}^\mu(t-t',x-y)I_{j_1}^\mu(t-t',x-y)\right|dxdydtdt'\notag\\\lesssim&\int_{|x-y|^\mu\leq|t-t'|}|F(t,x)||G(t',y)|\frac{1}{|t-t'|^{\frac{2d-4s_\mu}{\mu}}}dxdydtdt'\\\leq&\int_{\mathbb{R}^2}\Vert F(t,\cdot)\Vert_{L_x^p}\Vert G(t',\cdot)\Vert_{L_x^p}\left\Vert1_{|\cdot|^\mu\leq1}\right\Vert_{L_x^c}\frac{1}{|t-t'|^{\frac{2d-4s_\mu}{\mu}}}|t-t'|^\frac{2d}{\mu}\frac{1}{|t-t'|^{\frac{2d}{p\mu}}}dtdt'.
	\end{align*}
	
	\begin{itemize}
		\item To ensure convergence of the $L_x^c$ norm, we need\begin{equation*}
			2d-4s_\mu>\frac{d}{c},
		\end{equation*}and this is equivalent to $4s_\mu<2d/p$, which is trivial.
		\item The order of $1/|t-t'|$ is \begin{equation*}
			\frac{2d-4s_\mu}{\mu}-\frac{2d}{\mu}+\frac{2d}{p\mu}=\frac{2d}{\mu}-\frac{2d}{p\mu}+1-\frac{2d}{\mu}+\frac{2d}{p\mu}=1.
		\end{equation*}
	\end{itemize}
	
	\noindent\underline{\textbf{Case 3: On $j_0\in\mathbf{B}_1,j_1\in\mathbf{B_2\cup B_3}$}:}
	
	\vspace{2ex}
	
	Applying (\ref{44}), (\ref{45}), (\ref{46}) and the Young's convolution inequality yields 
	\begin{align*}
		&\int_{|x-y|^\mu\leq|t-t'|}|F(t,x)||G(t',y)||I_{j_0}^\mu(t-t',x-y)|\sum_{j_1\in\mathbf{B_2\cup B_3}} \left|I_{j_1}^\mu(t-t',x-y)\right|dxdydtdt'\\\lesssim&\int_{|x-y|^\mu\leq|t-t'|}|F(t,x)||G(t',y)|\left(\frac{|t-t'|}{|x-y|}\right)^{\frac{2d-\mu d-4s_\mu}{2-2\mu}}\frac{1}{|t-t'|^\frac{d}{2}}\frac{1}{|t-t'|^\frac{d-2s_\mu}{\mu}}dxdydtdt'\\\leq&\int_{\mathbb{R}^2}\Vert F(t,\cdot)\Vert_{L_x^p}\Vert G(t',\cdot)\Vert_{L_x^p}\left\Vert\frac{1_{|\cdot|^\mu\leq1}}{|\cdot|^{\frac{2d-\mu d-4s_\mu}{2-2\mu}}}\right\Vert_{L_x^c}\\&\qquad\qquad|t-t'|^{\frac{2d-\mu d-4s_\mu}{2-2\mu}}\frac{1}{|t-t'|^{\frac{2d-\mu d-4s_\mu}{\mu(2-2\mu)}}}\frac{1}{|t-t'|^\frac{d}{2}}\frac{1}{|t-t'|^\frac{d-2s_\mu}{\mu}}|t-t'|^\frac{2d}{\mu}\frac{1}{|t-t'|^{\frac{2d}{p\mu}}}dtdt'.
	\end{align*}
	\begin{itemize}
		\item The convergence of the $L_x^c$ norm follows by (\ref{48}).
		\item The order of $1/|t-t'|$ is \begin{align*}
			&\quad-\frac{2d-\mu d-4s_\mu}{2-2\mu}+\frac{d}{2}+\frac{d-2s_\mu}{\mu}+\frac{2d-\mu d-4s_\mu}{\mu(2-2\mu)}-\frac{2d}{\mu}+\frac{2d}{p\mu}\\&=\frac{2d-\mu d-4s_\mu}{2\mu}+\frac{\mu d}{2\mu}+\frac{2d-4s_\mu}{2\mu}-\frac{2d}{\mu}+\frac{2d}{p\mu}\\&=-\frac{4s_\mu}{\mu}+\frac{2d}{p\mu}\\&=1.
		\end{align*}
	\end{itemize}
	
	On the other hand, applying (\ref{45}), (\ref{46}), (\ref{47}) and Young's convolution inequality yields 
	
	\begin{align*}
		&\int_{|x-y|^\mu\geq|t-t'|}|F(t,x)||G(t',y)||I_{j_0}^\mu(t-t',x-y)|\sum_{j_1\in\mathbf{B_2\cup B_3}} \left|I_{j_1}^\mu(t-t',x-y)\right|dxdydtdt'\\\lesssim&\int_{|x-y|^\mu\geq|t-t'|}|F(t,x)||G(t',y)|\left(\frac{|t-t'|}{|x-y|}\right)^{\frac{d-2s_\mu}{1-\mu}}\frac{1}{|x-y|^{d-2s_\mu}}dxdydtdt'\\\leq&\int_{\mathbb{R}^2}\Vert F(t,\cdot)\Vert_{L_x^p}\Vert G(t',\cdot)\Vert_{L_x^p}\left\Vert\frac{1_{|\cdot|^\mu\geq1}}{|\cdot|^{\frac{d-2s_\mu}{1-\mu}+d-2s_\mu}}\right\Vert_{L_x^c}\\&\qquad\qquad\qquad\qquad\qquad\qquad|t-t'|^{\frac{d-2s_\mu}{1-\mu}}\frac{1}{|t-t'|^{\frac{d-2s_\mu}{\mu(1-\mu)}+\frac{d-2s_\mu}{\mu}}}|t-t'|^\frac{2d}{\mu}\frac{1}{|t-t'|^{\frac{2d}{p\mu}}}dtdt'.
	\end{align*}
	\begin{itemize}
		\item The convergence of the $L_x^c$ norm follows by \begin{equation*}
			\frac{d-2s_\mu}{1-\mu}+d-2s_\mu>2d-\frac{2d}{p}.
		\end{equation*}This is equivalent to $d/p<1-\mu/2+d$, and this holds since $p>1$.
		\item The order of $1/|t-t'|$ is \begin{equation*}
			-\frac{d-2s_\mu}{1-\mu}+\frac{d-2s_\mu}{\mu(1-\mu)}+\frac{d-2s_\mu}{\mu}-\frac{2d}{\mu}+\frac{2d}{p\mu}=-\frac{4s_\mu}{\mu}+\frac{2d}{p\mu}=1.
		\end{equation*}
	\end{itemize}
	
	\vspace{2ex}
	
	Following the structure of the proof of Theorem \ref{main1}, to handle the singularity arising from $1/|t-t'|$, we perform a dyadic decomposition in the temporal variable and use Proposition \ref{Bergh-Löfström} to sum up the pieces. For this, take $1<p^\ast<2d/(d+1)$ and fix it. Let $s_\mu^\ast$ be given by $2s_\mu^\ast=d/p^\ast-\mu/2$. Choose $\kappa\in C_c^\infty(\mathbb{R})$ and $\kappa_k$ as in Section 3.2. Based on the calculations in the preceding three cases, we define bilinear operators $\mathcal{T}_{w,k}$ for $w\in\{1,2,3,4,5,6\}$ as follows:\begin{equation*}
		\mathcal{T}_{1,k}(F,G)=\int_{|x-y|^\mu\leq|t-t'|}\kappa_k(t-t')F(t,x)G(t',y)\left(\frac{|t-t'|}{|x-y|}\right)^{\frac{2d-\mu d-4s_\mu^\ast}{1-\mu}}\frac{1}{|t-t'|^d}dxdydtdt';
	\end{equation*}\begin{equation*}
		\mathcal{T}_{2,k}(F,G)=\int_{|x-y|^\mu\geq|t-t'|}\kappa_k(t-t')F(t,x)G(t',y)\left(\frac{|t-t'|}{|x-y|}\right)^{\frac{2d-4s_\mu^\ast}{1-\mu}}dxdydtdt';
	\end{equation*}\begin{equation*}
		\mathcal{T}_{3,k}(F,G)=\int_{|x-y|^\mu\geq|t-t'|}\kappa_k(t-t')F(t,x)G(t',y)\frac{1}{|x-y|^{2d-4s_\mu^\ast}}dxdydtdt';
	\end{equation*}\begin{equation*}
		\mathcal{T}_{4,k}(F,G)=\int_{|x-y|^\mu\leq|t-t'|}\kappa_k(t-t')F(t,x)G(t',y)\frac{1}{|t-t'|^{\frac{2d-4s_\mu^\ast}{\mu}}}dxdydtdt';
	\end{equation*}\begin{equation*}
		\mathcal{T}_{5,k}(F,G)=\int_{|x-y|^\mu\leq|t-t'|}\kappa_k(t-t')F(t,x)G(t',y)\left(\frac{|t-t'|}{|x-y|}\right)^{\frac{2d-\mu d-4s_\mu^\ast}{2-2\mu}}\frac{1}{|t-t'|^\frac{d}{2}}\frac{1}{|t-t'|^\frac{d-2s_\mu^\ast}{\mu}}dxdydtdt';
	\end{equation*}\begin{equation*}
		\mathcal{T}_{6,k}(F,G)=\int_{|x-y|^\mu\geq|t-t'|}\kappa_k(t-t')F(t,x)G(t',y)\left(\frac{|t-t'|}{|x-y|}\right)^{\frac{d-2s_\mu^\ast}{1-\mu}}\frac{1}{|x-y|^{d-2s_\mu^\ast}}dxdydtdt'.
	\end{equation*}The following statement is an extension of Lemma \ref{33}.
	\begin{lem}\label{4.}
		Let $d\geq2$. Suppose $1<p^\ast<2d/(d+1)$. Let $a,b$ be such that $1/a+1/b>1$. Define \begin{equation*}
			\beta(a,b)=d+1-\frac{d}{\mu}\left(\frac{1}{a}+\frac{1}{b}\right),
		\end{equation*}and denote $\gamma(a,b)=\beta(p^\ast,p^\ast)-\beta(a,b)$. Then, there exists a region $X\subset(0,1)\times(0,1)$, such that the inequality \begin{equation}\label{49}
			|\mathcal{T}_{w,k}(F,G)|\lesssim2^{-k\gamma(a,b)}\Vert F\Vert_{L_t^2 L_x^a}\Vert G\Vert_{L_t^2 L_x^b}
		\end{equation}holds for all $w\in\{1,2,3,4,5,6\}$, $F\in L_t^2 L_x^a$, $G\in L_t^2 L_x^b$, and for all $(1/a,1/b)\in X$. Moreover, $(1/p^\ast,1/p^\ast)\in X^o$.
	\end{lem}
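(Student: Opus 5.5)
We follow the proof of Lemma \ref{33} closely: each $\mathcal{T}_{w,k}$ is handled by parabolic-type rescaling adapted to $\mu$, then Young's inequality in $x$ and in $t$. On the support of $\kappa_k$ we have $|t-t'|\sim2^k$, so every factor $|t-t'|^{\theta}$ can be replaced by $2^{k\theta}$. We then substitute $x\mapsto2^{k/\mu}x$, $y\mapsto2^{k/\mu}y$. This produces the Jacobian factor $2^{2kd/\mu}$, and the cut-offs $1_{|x-y|^\mu\leq|t-t'|}$, $1_{|x-y|^\mu\geq|t-t'|}$ become $1_{|x-y|\lesssim1}$, $1_{|x-y|\gtrsim1}$. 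Each $\mathcal{T}_{w,k}$ is therefore dominated by
\[
2^{k e_w}\int\kappa_k(t-t')\int|F(t,2^{k/\mu}x)||G(t',2^{k/\mu}y)|K_w(x-y)\,dx\,dy\,dt\,dt',
\]
where $K_w$ is a power $|z|^{-\rho_w}$ restricted to $|z|\lesssim1$ or $|z|\gtrsim1$. The exponents $\rho_w$ can be read off the definitions: $\rho_1=\rho_5\cdot2=\frac{2d-\mu d-4s^\ast_\mu}{1-\mu}$, $\rho_2=\frac{2d-4s_\mu^\ast}{1-\mu}$, $\rho_3=2d-4s_\mu^\ast$, $\rho_4=0$, and $\rho_6=\frac{d-2s^\ast_\mu}{1-\mu}+d-2s^\ast_\mu$. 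The exponent $e_w$ collects the powers of $2^k$.

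Next we apply Young's inequality in $x$ with $1/c=2-1/a-1/b$. This gives the factor $\|K_w\|_{L^c}\,2^{-\frac{k d}{\mu}(\frac1a+\frac1b)}\|F(t)\|_{L^a}\|G(t')\|_{L^b}$. Young's inequality in time then contributes the factor $2^{k}$ from $\|\kappa_k\|_{L^1}$, and leaves $\|F\|_{L^2_tL^a_x}\|G\|_{L^2_tL^b_x}$. The power of $2^k$ is affine in $1/a+1/b$ with slope $-d/\mu$, matching $\beta(a,b)$. By the exponent computations already carried out in Cases 1--3, all with $s_\mu^\ast$ in place of $s_\mu$, the total power at $a=b=p^\ast$ is exactly $0$; this is the statement that the order of $1/|t-t'|$ equals $1$. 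Hence the power is $-k\gamma(a,b)$ for every $w$. The small constant difference between $2s^\ast_\mu=d/p^\ast-\mu/2$ and the normalisation of $\beta$ should be checked so that $\beta(p^\ast,p^\ast)$ matches; this is bookkeeping.

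The region $X$ is then defined as the set of $(1/a,1/b)\in(0,1)^2$ with $1/a+1/b>1$ for which every $\|K_w\|_{L^c}$ is finite. Local kernels ($|z|\lesssim1$) need $\rho_w c<d$, i.e.\ $\rho_w<d(2-\frac1a-\frac1b)$. Kernels at infinity need $\rho_w c>d$, or $c=\infty$ with $\rho_w\ge0$. Each condition is a strict linear inequality in $1/a+1/b$, so $X$ is open, being a finite intersection of open half-planes with $(0,1)^2$.

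The main obstacle is verifying that $(1/p^\ast,1/p^\ast)$ satisfies all of these inequalities strictly. This amounts to redoing the convergence checks of Cases 1--3 at $p=p^\ast$. The local conditions for $w=1,5$ reduce to (\ref{48}), which is equivalent to $1/p^\ast>(d+1)/2d$; here the hypothesis $p^\ast<2d/(d+1)$ is used, and for $w=5$ the exponent is half as large, so the condition is weaker. The condition for $w=4$ is trivial. The conditions at infinity for $w=2,3,6$ reduce to $1/p^\ast<(2d+1)/2d$, $4s^\ast_\mu<2d/p^\ast$, and $p^\ast>1$ respectively, all of which hold strictly. Since every inequality holds strictly at this point and is continuous in $(1/a,1/b)$, a small neighbourhood of $(1/p^\ast,1/p^\ast)$ lies in $X$. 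This gives $(1/p^\ast,1/p^\ast)\in X^o$ and completes the lemma.
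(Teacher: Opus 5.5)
Your proposal is correct and follows essentially the same route as the paper. You replace $|t-t'|$ by $2^k$, rescale $x,y$ by $2^{k/\mu}$, and apply Young's inequality in space with $1/c=2-1/a-1/b$ and in time with $\|\kappa_k\|_{L^1}\sim 2^k$. You then obtain $X$ from finiteness of the six $L^c$ kernel norms, and the strict conditions you list at $(1/p^\ast,1/p^\ast)$ are the right ones. The only difference is cosmetic: you take $X$ to be the maximal open set cut out by these half-plane conditions and get $(1/p^\ast,1/p^\ast)\in X^o$ from strictness. The paper instead computes explicit thresholds $\eta_i$ along the diagonal and intersects the resulting boxes $X_1,\dots,X_6$.
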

	
		\begin{proof}
		To establish (\ref{49}), we give computational details for each case $\mathcal{T}_{w,k}$. First note that for $\mathcal{T}_{1,k}$, similar with the calculation in Case 1, we deduce \begin{align*}
			&\quad\int_{|x-y|^\mu\leq|t-t'|}\kappa_k(t-t')|F(t,x)||G(t',y)|\frac{1}{|x-y|^{\frac{2d-\mu d-4s_\mu^\ast}{1-\mu}}}\frac{1}{|t-t'|^{d-\frac{2d-\mu d-4s_\mu^\ast}{1-\mu}}}dxdydtdt'\\&\lesssim2^{-k\left(d-\frac{2d-\mu d-4s_\mu^\ast}{1-\mu}\right)}\int_{|x-y|^\mu\leq2^k}\kappa_k(t-t')|F(t,x)||G(t',y)|\frac{1}{|x-y|^{\frac{2d-\mu d-4s_\mu^\ast}{1-\mu}}}dxdydtdt'\\&\leq2^{-k\left(d-\frac{2d-\mu d-4s_\mu^\ast}{1-\mu}\right)}2^{-\frac{k}{\mu}\left(\frac{2d-\mu d-4s_\mu^\ast}{1-\mu}\right)}2^{2d\frac{k}{\mu}}2^{-k\frac{d}{\mu}\left(\frac{1}{a}+\frac{1}{b}\right)}2^k\Vert F\Vert_{L_t^2 L_x^a}\Vert G\Vert_{L_t^2 L_x^b}\left\Vert\frac{1_{|\cdot|^\mu\leq1}}{|\cdot|^{\frac{2d-\mu d-4s_\mu^\ast}{1-\mu}}}\right\Vert_{L_x^c}\\&=2^{-k\gamma(a,b)}\Vert F\Vert_{L_t^2 L_x^a}\Vert G\Vert_{L_t^2 L_x^b}\left\Vert\frac{1_{|\cdot|^\mu\leq1}}{|\cdot|^{\frac{2d-\mu d-4s_\mu^\ast}{1-\mu}}}\right\Vert_{L_x^c}.
		\end{align*}Similar calculations in Case 1, Case 2, and Case 3 give \begin{equation*}
		\mathcal{T}_{w,k}(F,G)\lesssim2^{-k\gamma(a,b)}K_w\Vert F\Vert_{L_t^2 L_x^a}\Vert G\Vert_{L_t^2 L_x^b}
		\end{equation*}for every $w\in\{1,2,3,4,5,6\}$, where $K_w$ denotes the value of $L_x^c$ term for each $w$, as in Case 1, Case 2, and Case 3, namely, \begin{align*}
		&K_1=\left\Vert\frac{1_{|\cdot|^\mu\leq1}}{|\cdot|^{\frac{2d-\mu d-4s_\mu^\ast}{1-\mu}}}\right\Vert_{L_x^c};\qquad K_2=\left\Vert\frac{1_{|\cdot|^\mu\geq1}}{|\cdot|^{\frac{2d-4s_\mu^\ast}{1-\mu}}}\right\Vert_{L_x^c};\qquad K_3=\left\Vert\frac{1_{|\cdot|^\mu\geq1}}{|\cdot|^{2d-4s_\mu^\ast}}\right\Vert_{L_x^c};\\&K_4=\left\Vert1_{|\cdot|^\mu\leq1}\right\Vert_{L_x^c};\qquad K_5=\left\Vert\frac{1_{|\cdot|^\mu\leq1}}{|\cdot|^{\frac{2d-\mu d-4s_\mu^\ast}{2-2\mu}}}\right\Vert_{L_x^c};\qquad K_6=\left\Vert\frac{1_{|\cdot|^\mu\geq1}}{|\cdot|^{\frac{d-2s_\mu^\ast}{1-\mu}+d-2s_\mu^\ast}}\right\Vert_{L_x^c}.
		\end{align*}
		
		\vspace{2ex}
		
		Note that our purpose is to figure out some $X\subset(0,1)\times(0,1)$ such that $(1/p^\ast,1/p^\ast)\in X^o$, satisfies $K_w<\infty$ for every $w$ and for $(1/b,1/a)\in X$. To see that, we consider a special case \begin{equation*}
			\frac{1}{a}=\frac{1}{b}=\frac{1}{p^\ast}+\eta.
		\end{equation*}Under this assumption, we have \begin{equation*}
			\frac{1}{c}=2-\frac{2}{p^\ast}-2\eta.
		\end{equation*}To state the following precisely, we denote our target point $p_0=(1/p^\ast,1/p^\ast)$.\begin{itemize}
			\item For $K_1<\infty$, we need\begin{equation*}
				\frac{2d-\mu d-4s_\mu^\ast}{1-\mu}<d\left(2-\frac{2}{p^\ast}-2\eta\right),
			\end{equation*}which means \begin{equation*}
			\eta<\frac{\mu}{2-2\mu}\left(\frac{2}{p^\ast}-\frac{d+1}{d}\right),
			\end{equation*}shows that we can choose \begin{equation*}
				\eta_1=\frac{\mu}{4-4\mu}\left(\frac{2}{p^\ast}-\frac{d+1}{d}\right)
			\end{equation*}to ensure that\begin{equation*}
				p_0\in X_1=\left(0,\frac{1}{p^\ast}+\eta_1\right)\times\left(0,\frac{1}{p^\ast}+\eta_1\right).
			\end{equation*}
			\item For $K_2$, if we restrict \begin{equation*}
				\eta>-\frac{\mu}{2d(1-\mu)},
			\end{equation*}then we have \begin{equation*}
				\frac{2d-4s_\mu^\ast}{1-\mu}>d\left(2-\frac{2}{p^\ast}-2\eta\right),
			\end{equation*}which means $K_2<\infty$, and we can choose \begin{equation*}
				\eta_2=-\frac{\mu}{4d(1-\mu)}
			\end{equation*}to ensure that\begin{equation*}
				p_0\in X_2=\left(\frac{1}{p^\ast}+\eta_2,1\right)\times\left(\frac{1}{p^\ast}+\eta_2,1\right).
			\end{equation*}
			\item For $K_3<\infty$, easily we deduce\begin{equation*}
				\mu>-2d\eta.
			\end{equation*}Thus, we can take \begin{equation*}
				\eta_3=-\frac{\mu}{4d}
			\end{equation*}to ensure that\begin{equation*}
				p_0\in X_3=\left(\frac{1}{p^\ast}+\eta_3,1\right)\times\left(\frac{1}{p^\ast}+\eta_3,1\right).
			\end{equation*}
			\item For $K_4$, \begin{equation*}
				X_4=\left\{\left(\frac{1}{a},\frac{1}{b}\right);1<a,b<2,\frac{1}{a}+\frac{1}{b}>1\right\}
			\end{equation*}trivially ensures $K_4<\infty$ and $p_0\in X_4$.
			\item For $K_5<\infty$,\begin{equation*}
				2d\eta<2d-\frac{2d}{p^\ast}-\frac{2d-\mu d-4s_\mu^\ast}{2-2\mu}
			\end{equation*}means $\eta$ can be chosen as a positive number. More precisely, if $0<\mu<1/2$, then \begin{equation*}
				(2-2\mu)\left(2d-\frac{2d}{p^\ast}-\frac{2d-\mu d-4s_\mu^\ast}{2-2\mu}\right)>\mu(d-1)>0;
			\end{equation*}if $1/2\leq\mu<1$, then\begin{equation*}
				2d-\frac{2d}{p^\ast}-\frac{2d-\mu d-4s_\mu^\ast}{2-2\mu}>\frac{d-1}{2}>0.
			\end{equation*}We then choose \begin{equation*}
				\eta_5=\frac{1}{4d}\left(2d-\frac{2d}{p^\ast}-\frac{2d-\mu d-4s_\mu^\ast}{2-2\mu}\right)
			\end{equation*}to ensure that\begin{equation*}
				p_0\in X_5=\left(0,\frac{1}{p^\ast}+\eta_5\right)\times\left(0,\frac{1}{p^\ast}+\eta_5\right).
			\end{equation*}
			\item For $K_6<\infty$, we need \begin{equation*}
				\frac{d-2s_\mu^\ast}{1-\mu}+d-2s_\mu^\ast>d\left(2-\frac{2}{p^\ast}-2\eta\right),
			\end{equation*}so\begin{equation*}
			\eta>\frac{\mu}{4d(1-\mu)}\left(\frac{2d}{p^\ast}+\mu-2d-2\right).
			\end{equation*}Since $p^\ast>1$ and $0<\mu<1$, we can choose \begin{equation*}
				\eta_6=\frac{\mu}{8d(1-\mu)}\left(\frac{2d}{p^\ast}+\mu-2d-2\right)
			\end{equation*}to ensure that\begin{equation*}
				p_0\in X_6=\left(\frac{1}{p^\ast}+\eta_6,1\right)\times\left(\frac{1}{p^\ast}+\eta_6,1\right).
			\end{equation*}
		\end{itemize}In conclusion, if we take \begin{align*}
			X=&\bigcap_{w=1}^6 X_w\\=&\left\{\left(\frac{1}{p^\ast}+\max_{i\in\{2,3,6\}}\{\eta_i\},\frac{1}{p^\ast}+\min_{i\in\{1,5\}}\{\eta_i\}\right)\times\left(\frac{1}{p^\ast}+\max_{i\in\{2,3,6\}}\{\eta_i\},\frac{1}{p^\ast}+\min_{i\in\{1,5\}}\{\eta_i\}\right)\right\}\bigcap X_4,
		\end{align*}then $(1/p^\ast,1/p^\ast)\in(X)^o$. Also, $K_w<\infty$ for every $w\in\{1,2,3,4,5,6\}$ and for every $(1/b,1/a)\in X$, which completes our proof of Lemma \ref{4.}.
	\end{proof}\begin{figure}[H]
	\centering
	\begin{tikzpicture}
		\draw [->, loosely dotted, thin] (0, 0) -- (6.5, 0) node [right] {$\frac{1}{b}$};
		\draw [->, loosely dotted, thin] (0, 0) -- (0, 6.5) node [above] {$\frac{1}{a}$};
		\fill [gray!20!white] (2.5, 4.7) -- (4.7, 4.7) -- (4.7, 2.5) -- (3.5, 2.5) -- (2.5, 3.5) -- cycle;
		\draw [dotted, thin] (6, 0) -- (6, 6);
		\draw [dotted, thin] (6, 0) -- (0, 6);
		\draw [dotted, thin] (0, 6) -- (6, 6);
		\draw [dotted, thin] (0, 0) -- (0, 6);
		\draw [dotted, thin] (0, 0) -- (6, 0);
		\draw [dotted, thin] (0, 3.9) -- (6, 3.9);
		\draw [dotted, thin] (3.9, 0) -- (3.9, 6);
		\draw [dotted, thin]  (2.5, 0) -- (2.5, 6);
		\draw [dotted, thin]  (0, 2.5) -- (6, 2.5);
		\draw [dotted, thin]  (4.7, 0) -- (4.7, 6);
		\draw [dotted, thin]  (0, 4.7) -- (6, 4.7);
		\fill (0, 0) circle (0.01cm) node [below] {$O$};
		\fill (0, 6) circle (0.01cm) node [left] {$1$};
		\fill (6, 0) circle (0.01cm) node [below] {$1$};
		\fill (0, 2.5) circle (0.01cm) node [left] {$w_1$};
		\fill (2.5, 0) circle (0.01cm) node [below] {$w_1$};
		\fill (0, 4.7) circle (0.01cm) node [left] {$w_2$};
		\fill (4.7, 0) circle (0.01cm) node [below] {$w_2$};
		\fill (0, 3.9) circle (0.01cm) node [left] {$\frac{1}{p^\ast}$};
		\fill (3.9, 0) circle (0.01cm) node [below] {$\frac{1}{p^\ast}$};
		\fill (3.9, 3.9) circle (0.04cm) node [above right] {$p_0$};
		\fill (8.3, 6) circle (0.01cm) node [below] {Here,};
		\fill (10, 4) circle (0.01cm) node [below] {$w_2:=\frac{1}{p^\ast}+\min_{i\in\{1,5\}}\{\eta_i\}$.};
		\fill (10, 5) circle (0.01cm) node [below] {$w_1:=\frac{1}{p^\ast}+\max_{i\in\{2,3,6\}}\{\eta_i\}$;};
	\end{tikzpicture}	
	\caption{The interpolation region $X$.}
	\end{figure}
	
	The proof of (\ref{41}) for $0<\mu<1$ then follows by the argument in Section \ref{s2}, by applying (\ref{49}) and Proposition \ref{Bergh-Löfström}.


\begin{thebibliography}{1}
		
		\bibitem{Bergh}
		J.~Bergh, J.~Löfström,
		\newblock {\em Interpolation Spaces: An Introduction},
		\newblock Springer-Verlag, New York, 1976.
		
		\bibitem{Bez}
		N.~Bez, Y.~Hong, S.~Lee, S.~Nakamura, Y.~Sawano,
		\newblock {\em On the Strichartz estimates for orthonormal systems of initial data with regularity},
		\newblock Adv. Math. 354 (2019), 106736.
		
		\bibitem{Bez2}
		N.~Bez, S.~Kinoshita, S.~Shiraki,
		\newblock {\em Boundary Strichartz estimates and pointwise convergence for orthonormal systems},
		\newblock Trans. London Math. Soc. 11 (2024), no. 1, e70002.
		
		\bibitem{Bez3}
		N.~Bez, S.~Lee, S.~Nakamura,
		\newblock {\em Strichartz estimates for orthonormal families of initial data and weighted oscillatory integral estimates},
		\newblock Forum Math. Sigma 9 (2021), e1, 1--52.
		
		\bibitem{Chen}
		T.~Chen, Y.~Hong, N.~Pavlović,
		\newblock {\em Global well-posedness of the NLS system for infinitely many fermions},
		\newblock Arch. Ration. Mech. Anal. 224 (2017), 91--123.
		
		\bibitem{Cwikel}
		M.~Cwikel,
		\newblock {\em On $(L^{p_0}(A_0),L^{p_1}(A_1))_{\theta,q}$},
		\newblock Proc. Amer. Math. Soc. 44 (1974), 286--292.
		
		\bibitem{Feng}
		G.~Feng, M.~Song, H.~Wu,
		\newblock {\em Strichartz estimates involving orthonormal systems at the critical summability exponent},
		\newblock arXiv:2507.14974.
		
		\bibitem{Frank1}
		R.~Frank, M.~Lewin, E.~Lieb, R.~Seiringer,
		\newblock {\em Strichartz inequality for orthonormal functions},
		\newblock J. Eur. Math. Soc. 16 (2014), 1507--1526.
		
		\bibitem{Frank2}
		R.~Frank, J.~Sabin,
		\newblock {\em The Stein-Tomas inequality in trace ideals},
		\newblock Séminaire Laurent Schwartz - Équations aux dérivées partielles et applications. Année 2015--2016, Exp. No. XV, 12 pp., Ed. Éc. Polytech., Palaiseau, 2017.
		
		\bibitem{Frank3}
		R.~Frank, J.~Sabin,
		\newblock {\em Restriction theorems for orthonormal functions, Strichartz inequalities, and uniform Sobolev estimates},
		\newblock Amer. J. Math. 139 (2017), 1649--1691.
		
		\bibitem{Frank4}
		R.~Frank,
		\newblock {\em Lieb-Thirring inequalities and other functional inequalities for orthonormal systems},
		\newblock Proc. Int. Cong. Math. 2022, Vol. 5, EMS Press, Berlin, 2023, 3756--3774.
		
		\bibitem{Ginibre-Velo}
		J.~Ginibre, G.~Velo,
		\newblock {\em Smoothing properties and retarded estimates for some dispersive evolution equations},
		\newblock Commun. Math. Phys. 144 (1992), 163--188.
		
		\bibitem{Z.Guo}
		Z.~Guo, J.~Li, K.~Nakanishi, L.~Yan,
		\newblock {\em On the boundary Strichartz estimates for wave and Schrödinger equations},
		\newblock J. Differential Equations 265 (2018), 5656--5675.
		
		\bibitem{Keel-Tao}
		M.~Keel, T.~Tao,
		\newblock {\em Endpoint Strichartz estimates},
		\newblock Amer. J. Math. 120 (1998), 955--980.
		
		\bibitem{Laskin1}
		N.~Laskin,
		\newblock {\em Fractional quantum mechanics and Lévy path integrals},
		\newblock Phys. Lett. A 268 (2000), 298--305.
		
		\bibitem{Laskin2}
		N.~Laskin,
		\newblock {\em Fractional Schrödinger equation},
		\newblock Phys. Rev. E 66 (2002), 056108.
		
		\bibitem{Lions}
		J.~Lions, J.~Peetre,
		\newblock {\em Sur une classe d’espaces d’interpolation},
		\newblock Inst. Hautes Etudes Sci. Publ. Math. 19 (1964), 5--68.
		
		\bibitem{Montgomery-Smith}
		S.~J.~Montgomery-Smith,
		\newblock {\em Time decay for the bounded mean oscillation of solutions of the Schrödinger and wave equations},
		\newblock Duke Math. J. 91 (1998), 393--408.
		
		\bibitem{C.Muscalu}
		C.~Muscalu, W.~Schlag,
		\newblock {\em Classical and Multilinear Harmonic Analysis: Volume I},
		\newblock Cambridge Univ. Press, 2013.
		
		\bibitem{Simon}
		B.~Simon,
		\newblock {\em Trace ideals and their applications},
		\newblock London Mathematical Society Lecture Note Series 35, Cambridge Univ. Press, 1979.
		
		\bibitem{M.Stein 1}
		E.~M.~Stein, R.~Shakarchi,
		\newblock {\em Functional Analysis},
		\newblock Princeton Univ. Press, 2011.
		
		\bibitem{Strichartz}
		R.~S.~Strichartz,
		\newblock {\em Restrictions of Fourier transforms to quadratic surfaces and decay of solutions of wave equations},
		\newblock Duke Math. J. 44 (1977), 705--714.
		
	\end{thebibliography}
	\end{document}